\documentclass[
final
 , nomarks
]{dmtcs-episciences}


\usepackage[utf8]{inputenc}
\usepackage{subfigure}
\usepackage{amsmath, tikz}

\newtheorem{thr}{Theorem}[section]

\newtheorem{lem}[thr]{Lemma}
\newtheorem{prop}[thr]{Proposition}
\newtheorem{conj}[thr]{Conjecture}
\newtheorem{cor}[thr]{Corollary}

\newtheorem{defi}[thr]{Definition}

\def\T{\mathbb{T}}
\def\P{\mathcal{P}}

\DeclareMathOperator{\ecc}{ecc}

\DeclareMathOperator{\Id}{Id}
\DeclareMathOperator{\A}{A}
\DeclareMathOperator{\D}{D}
\DeclareMathOperator{\W}{W}
\DeclareMathOperator{\M}{M}
\DeclareMathOperator{\WW}{WW}
\DeclareMathOperator{\TW}{TW}
\DeclareMathOperator{\Sz}{Sz}
\DeclareMathOperator{\wSz}{wSz}

%

\usepackage[round]{natbib}

\author{Stijn Cambie\thanks{ Email: \href{mailto:stijn.cambie@hotmail.com}{stijn.cambie@hotmail.com}. This work has been supported by a Vidi Grant of the Netherlands Organization for Scientific Research (NWO), grant number $639.032.614$.} }%
\title{Five results on maximizing topological indices in graphs}
\affiliation{
	Radboud University Nijmegen, The Netherlands}
\keywords{topological indices, average distance, Wiener index, eccentricity, Szeged index, extremal graphs}
\received{2020-11-11}
\revised{2021-03-25, 2021-08-16}
\accepted{2021-09-29}
\begin{document}
\publicationdetails{23}{2021}{3}{10}{6896}

\tikzstyle{every node}=[circle, draw, fill=black!50,
inner sep=0pt, minimum width=4pt]

\definecolor{xdxdff}{rgb}{0.49019607843137253,0.49019607843137253,1.}
\definecolor{ududff}{rgb}{0.30196078431372547,0.30196078431372547,1.}

\tikzstyle{every node}=[circle, draw, fill=black!50,
inner sep=0pt, minimum width=5pt]
\maketitle
\begin{abstract}
In this paper, we prove a collection of results on graphical indices.
We determine the extremal graphs attaining the maximal generalized Wiener index (e.g. the hyper-Wiener index) among all graphs with given matching number or independence number. This generalizes some work of Dankelmann, as well as some work of Chung. 
We also show alternative proofs for two recent results on maximizing the Wiener index and external Wiener index by deriving it from earlier results.
We end with proving two conjectures. We prove that the maximum for the difference of the Wiener index and the eccentricity is attained by the path if the order $n$ is at least $9$ and that the maximum weighted Szeged index of graphs of given order is attained by the balanced complete bipartite graphs.
\end{abstract}

\section{Introduction}

Let $G$ be a simple connected graph, as we only work with connected graphs in this paper. We denote its vertex set by $V(G)$ and its edge set by $E(G).$
The independence number of a graph $G$, denoted by $\alpha(G)$, is the size of the largest independent vertex set. 
The matching number of a graph $G$ is the size of a maximum independent edge subset of $G$, we will denote it by $m(G)$ or $m$.
We will denote by $\T(n,m)$ the set of all trees with $n$ vertices and matching number $m$. A path $P_n$ is a path of order $n$.

Let $d(u, v)$ denote the distance between vertices $u$ and $v$ in a graph $G$.
The diameter $d(G)$ of a graph equals $\max_{u,v \in V(G)} d(u,v).$
The eccentricity of a vertex $v$, $\varepsilon(v)$ equals $\max_{u \in V(G)} d(u,v).$ The eccentricity of a graph $G$ is the sum of the eccentricities over all vertices, i.e. $\varepsilon(G)=\sum_{v \in V} \varepsilon(v).$

The degree of the vertex $u$ will be denoted $\deg(u)$.
For an edge $e=uv$, $n_u(e)$ will be equal to the number of vertices $x$ for which $d(x,u)<d(x,v)$.
The Wiener index of a graph $G$ equals the sum of distances between all unordered pairs of vertices, i.e. $$\W(G)=\sum_{\{u,v\} \subset V(G)} d(u,v).$$
The mean distance of the graph $G$ equals $\mu(G)=\frac{\W(G)}{\binom n2}.$
Some general form of mean distance can be derived from the notion of power means.

\begin{defi}
	The $j^{th}$ power mean of $n$ positive real numbers $x_1,x_2,\ldots, x_n$ is $$\M_j(x_1,\ldots, x_n)= \sqrt[j]{\frac{x_1^j+x_2^j+\ldots+x_n^j}{n}}.$$
	When $j=0$, $\M_0(x_1,\ldots, x_n)= \sqrt[n]{x_1x_2\ldots x_n}$.
	\\Furthermore $\M_{\infty}(x_1,\ldots, x_n)=\max \{x_1,x_2,\ldots, x_n\}, \M_{-\infty}(x_1,\ldots, x_n)=\min \{x_1,x_2,\ldots, x_n\}$.
\end{defi}

Other graphical indices used in this paper, are the hyper-Wiener index, the external Wiener index, terminal Wiener index, Szeged index and weighted Szeged index. They are defined respectively as 
$$\WW(G)= \frac12 \sum_{\{u,v\} \subset V(G)} d^2(u,v)+d(u,v)$$
$$\W_{ex}(G)=\sum_{u,v \in V(G), \min\{\deg(u),\deg(v)\}=1} d(u,v) $$
$$\TW(G)=\sum_{u,v \in V(G), \deg(u)=\deg(v)=1} d(u,v) $$
$$\Sz(G) = \sum_{e= \{u,v\}\in E(G)} n_u(e) \cdot n_v(e)$$
$$\wSz(G) = \sum_{e= \{u,v\}\in E(G)} \left(\deg(u)+\deg(v)\right) \cdot n_u(e) \cdot n_v(e)$$

In Section~\ref{sec:alt_dank} we give an alternative proof for a theorem of Dankelmann~\cite{D} on the maximum Wiener index of a connected graph with given order and matching number. We prove this for a notion of generalized Wiener index $\W_f$, implying the result for e.g. the hyper-Wienerindex.
Due to a relation between order, matching number and independence number, we also observe a power mean version of a result of Chung~\cite{C}.
We present alternative, short proofs for the main results of~\cite{DIS} and~\cite{JL19} based on results known before in Sections~\ref{sec:3} and~\ref{sec:4} respectively. Also we give a proof for Conjecture $4.3$ in \cite{DAKD21} in Section~\ref{sec:5} and for Conjecture $1$ in~\cite{BFJS} in Section~\ref{sec:6}.

\section{Maximum generalized Wiener index given $m$ or $\alpha$}\label{sec:alt_dank}

Theorems $2.14$, $3.10$ and $4.7$ in the survey of \cite{OverviewWW} give the extremal graphs attaining the minimum hyper-Wiener index among all graphs with given order and matching number, for the family of graphs being the connected graphs, the trees and the unicyclic graphs respectively. Some general version was proven in~\cite{CWZ17} as the result holds for a more general class of indices represented by $F.$
In this section we will prove the analog for the maximum. The general statement works for a different class of distance-based indices.

\begin{defi}
	The generalized Wiener indices are of the form $$\W_f(G)= \sum_{\{u,v\} \subset V(G)} f\left(d(u,v) \right)$$ where $f$ is a convex function satisfying $f(0)=0$ which is strictly increasing on $\mathbb{R}^+$.
\end{defi}

Note that when we take $f \equiv \Id$ or $f \colon x \mapsto \binom{x+1}{2}$, we get the Wiener index or the hyper-Wiener index, respectively.
The condition that $f(0)=0$ is just a handy convention, as $\W_f$ is just shifted with $\binom{n}{2} c$ if one shifts $f$ with a constant $c.$
The additional constraint that $f$ is convex (when comparing with the result in~\cite{CWZ17})  is added to have the same extremal graph for the whole class of indices.

Let $\A_{n,m}$ be a path with $2m-1$ vertices, with one leaf of the path connected to $ \lceil \frac{n-2m+1}{2} \rceil $ different vertices and the other leaf with $ \lfloor \frac{n-2m+1}{2} \rfloor $ pendent vertices, i.e. it is a balanced double broom with $n-(2m-1)$ leaves when $n \ge 2m+1$ and if $n=2m$, it is a path.

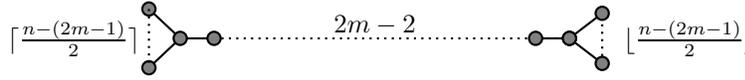
\begin{figure}[h]
	\centering
	\begin{tikzpicture}[thick,scale=0.9]%
	\draw \foreach \a in {10,-10} \foreach \x in {180} {
		(\x:2)--(\x+\a:2.5) node {}
	};
	\draw[dotted] \foreach \x in {0} {
		(-1.5,0)--(3.75,0)  
	};
	\draw 	(0:3.25)--  (0:3.75)  node {};
	\draw 	(-2,0)--  (-1.5,0)  node {};
	\draw[dotted] (170:2.5)--(190:2.5);

	\draw[dotted]  \foreach \x in {0} {
		(\x-5:4.25)--(\x+5:4.25) 
	};
	
	\draw \foreach \a in {5,-5} \foreach \x in {0} {
		(\x:3.75)--(\x+\a:4.25) node {}
	};

	\coordinate [label=center:$2m-2$] (A) at (0.875,0.2);

	\coordinate [label=right:$\lfloor \frac{n-(2m-1)}{2} \rfloor $] (A) at (0:4.5); 
	\coordinate [label=left:$\lceil \frac{n-(2m-1)}{2} \rceil $] (A) at (180:2.55);

	\draw	(3.25,0)  node {};
	\draw	(-2,0)  node {};	
	\draw	(3.75,0)  node {};

	\end{tikzpicture}
	\caption{Extremal graph $\A_{n,m}$}
	\label{fig:graphDankelmann}
\end{figure}

For any generalized Wiener index $\W_f$, we will prove that $\A_{n,m}$ is the unique extremal graph $G$ attaining the maximum value of $\W_f(G)$ among all graphs having order $n$ and matching number $m$. This was known already for the Wiener index by Dankelmann~\cite{D}.

We will use a kind of tree rearrangements, which we call subtree pruning and regrafting (SPR). It was defined in~\cite{SC19}, but for completeness we give the definition here again.

\begin{defi}[SPR]
	Let $G$ be a graph.
	Given a rooted subtree $S$ of $G$, such that the root $d=S \cap H.$
	Pruning $S$ from $G$ is removing the whole structure $S$ excluding the root $d$.
	Regrafting $S$ at a vertex $v$, means that we are taking a copy $S'$ of $S$ which we insert at $v$, letting its root $d'$ coincide with $v$. No additional edges are drawn in this process.
\end{defi}

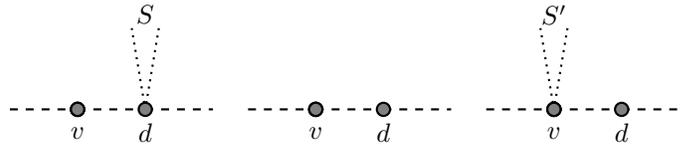
\begin{figure}[h]
	\centering
	
	\begin{tikzpicture}[thick,scale=0.9]%
	\draw[dotted] (1.2,1.2)--(1,0)--(0.8,1.2);
	\draw[dashed] (-1,0) -- (0,0);
	\draw[dashed] (0,0) -- (1,0);
	\draw[dashed] (1,0)  -- (2,0);
	\draw node{} (0,0) node{} ;
	\draw node{} (1,0) node{} ;
	\coordinate [label=center:$S$] (A) at (1,1.4);
	\coordinate [label=center:$d$] (A) at (1,-0.35);
	\coordinate [label=center:$v$] (A) at (0,-0.35);
	
	\end{tikzpicture} \quad
	\begin{tikzpicture}[thick,scale=0.9]%
	
	\draw[dashed] (-1,0) -- (0,0);
	\draw[dashed] (0,0) -- (1,0);
	\draw[dashed] (1,0)  -- (2,0);
	\draw node{} (0,0) node{} ;
	\draw node{} (1,0) node{} ;
	\coordinate [label=center:$d$] (A) at (1,-0.35);
	\coordinate [label=center:$v$] (A) at (0,-0.35);
	\end{tikzpicture} \quad
	\begin{tikzpicture}[thick,scale=0.9]%
	\draw[dotted] (0.2,1.2)--(0,0)--(-0.2,1.2);
	\draw[dashed] (-1,0) -- (0,0);
	\draw[dashed] (0,0) -- (1,0);
	\draw[dashed] (1,0)  -- (2,0);
	\draw node{} (0,0) node{} ;
	\draw node{} (1,0) node{} ;
	\coordinate [label=center:$S'$] (A) at (0,1.4);
	\coordinate [label=center:$d$] (A) at (1,-0.35);
	\coordinate [label=center:$v$] (A) at (0,-0.35);
	
	\end{tikzpicture}

	\caption{the graph $G$, $S$ being pruned from $G$ and $S$ being regrafted at $v$}
	\label{fig:SPR}
\end{figure}

We know that extremal graphs are trees, since deleting an edge which is not part of a maximum matching will increase the generalized Wiener index as at least one distance strictly increases.

We will use the notation $\W_f(\T(n,m)) = \max \{ \W_f(G) \mid G \in \T(n,m)\}$. 
For $m=1$, the extremal graphs are stars. So from now onwards, we assume $m>1$, which implies that the diameter of the extremal graph (being a tree) is at least $3$.

The first proposition we need for the proof is the following.

\begin{prop}\label{monotonicityTree}
	For fixed $n$, when $m_1<m_2$ and the sets $\T(n,m_1)$ and $\T(n,m_2)$ are both nonempty, then $\W_f(\T(n,m_1)) < \W_f(\T(n,m_2))$.
\end{prop}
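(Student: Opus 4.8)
The plan is to show that from any tree $T \in \T(n,m_2)$ one can construct a tree $T' \in \T(n,m_1)$ with $\W_f(T') < \W_f(T)$; since $T$ can be taken to be the extremal tree for $\T(n,m_2)$, this gives $\W_f(\T(n,m_1)) \le \W_f(T') < \W_f(T) = \W_f(\T(n,m_2))$. By induction it suffices to treat the case $m_2 = m_1 + 1$. So fix a tree $T$ achieving $\W_f(\T(n,m_1+1))$; we want to decrease its matching number by exactly one while strictly decreasing $\W_f$, keeping $n$ fixed and the graph a tree (connected).

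The key step is a local move: take a maximum matching $M$ of $T$ and a leaf $\ell$ whose incident edge $e = \ell u$ lies in $M$ (such a leaf exists, since otherwise one could enlarge $M$ along a pendant edge, or argue via an endpoint of a longest path — in a tree the two ends of a diametral path are leaves, and a standard exchange argument shows at least one diametral leaf-edge can be assumed to be in $M$). Now perform an SPR-type operation: prune the pendant vertex $\ell$ from $T$ and regraft it at a vertex $w$ chosen so that (i) $\ell$ becomes a "useless" leaf for matchings, i.e. $w$ is already matched by $M$ and no augmenting path is created, forcing $m(T') = m_1$; and (ii) the relocation does not decrease any pairwise distance — ideally it increases some distance. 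The natural choice is to attach $\ell$ to (a leaf at) the far end of a diametral path, turning $e$'s role over while lengthening the "broom". One must check that $\deg$, eccentricities, and the matching number behave as claimed; the matching-number bookkeeping uses the König/Gallai structure of trees (every tree has a unique maximum matching-size, and removing a pendant edge of a maximum matching and re-attaching the leaf to a saturated vertex that is not the endpoint of an $M$-alternating path of the right parity decreases $m$ by one).

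Concretely, I expect the cleanest route is: let $P = v_0 v_1 \cdots v_k$ be a longest path in $T$ (so $v_0, v_k$ are leaves and $k = d(T) \ge 3$ since $m_1+1 \ge 2$). Choose the leaf-edge among $v_0v_1$, $v_{k-1}v_k$ that belongs to a maximum matching (after a possible exchange we may assume $v_0 v_1 \in M$). Prune $v_0$ and regraft it as a new pendant neighbour of $v_k$. Then $d(v_0, x)$ for the relocated vertex equals $d(v_k,x)+1 \ge d(v_1,x)+1 > $ old value $d(v_0,x)$ for many $x$, and is never smaller — so $\W_f$ strictly increases when going from $T'$ (the new tree) to $T$... i.e. $\W_f(T') < \W_f(T)$. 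For the matching number: $v_0 v_1 \in M$ and $v_{k-1} v_k$ or $v_k$'s situation must be analysed — here we need $M \setminus \{v_0 v_1\}$ together with the forced structure near $v_k$ to show no matching of $T'$ has size $m_1+1$, i.e. $m(T') = m_1$.

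The main obstacle is precisely this last point: guaranteeing that the single SPR move lowers the matching number by \emph{exactly} one (not zero, and not two), for \emph{every} possible extremal tree $T$. This may require splitting into cases according to whether $v_k$ (the reattachment target) is $M$-saturated and whether $v_{k-1}v_k \in M$; if the naive target does not work, one falls back on the general principle that in a tree one can always find a pendant edge of some maximum matching whose leaf, when moved to a vertex saturated by that same matching and not reachable by an $M$-alternating path of odd length, yields $m(T') = m(T) - 1$. Packaging this cleanly — perhaps via the Gallai–Edmonds decomposition or via König's theorem ($m = n - \alpha$ for trees is false in general, but for bipartite graphs $m$ equals the vertex cover number) — and verifying that the same move does not shrink distances, is the crux; the distance inequality itself is routine once the move is pinned down.
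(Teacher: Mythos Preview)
Your proposal has a genuine gap, and it lies earlier than the matching-number bookkeeping you flag at the end: the distance claim for your SPR move is false, and the move does not even point in the direction you need. Relocating the leaf $v_0$ from $v_1$ to the far endpoint $v_k$ changes $d(v_0,x)$ from $d_T(v_1,x)+1$ to $d_T(v_k,x)+1$; for any $x$ closer to $v_k$ than to $v_1$ (for instance $x=v_k$ itself) this \emph{decreases}, while for $x$ near $v_1$ it increases. So ``never smaller'' fails, and the sign of $\W_f(T')-\W_f(T)$ is uncontrolled. Worse, your stated plan requires $\W_f(T')<\W_f(T)$, yet the distance inequality you wrote (were it true) would give $\W_f(T')>\W_f(T)$, so the argument is internally inconsistent. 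Concretely: if $T=P_n$ (the extremal tree for the top value $m_2=\lfloor n/2\rfloor$), your move returns $P_n$ and changes nothing; if $T=\A_{n,m}$ with at least two leaves on each side, your move lengthens the diameter and one checks that it \emph{increases} both $\W_f$ and the matching number.

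The paper sidesteps all of this by arguing in the opposite direction. One assumes for contradiction that $\W_f(\T(n,m))\ge\W_f(\T(n,m+1))$ for some least $m$, takes an extremal $G\in\T(n,m)$ (necessarily not a path), finds a vertex $w$ of degree $\ge 3$ with a closest leaf $\ell$, and prunes an entire \emph{branch} $S$ at $w$ (not the path to $\ell$) and regrafts it at $\ell$. Every distance between $S$ and the remaining branches then increases by exactly $d(w,\ell)$, while distances between $S$ and the $w$--$\ell$ path are merely permuted; hence $\W_f$ strictly increases. The matching number, on the other hand, can go up by at most one. This produces a tree with matching number $\le m+1$ and strictly larger $\W_f$, contradicting either the extremality of $G$ or the assumed inequality. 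The moral is that ``spreading out'' moves uniformly increase distances and only loosely raise the matching number, so the upward direction is the tractable one; trying to push the matching number down while simultaneously forcing $\W_f$ down is much harder, and your proposed move does neither.
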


\begin{proof}
	Assume this proposition is not true. In that case there exist some $n$ and $m$ such that 
	$\T(n,m)$ and $\T(n,m+1)$ are both nonempty and $\W_f(\T(n,m)) \ge  \W_f(\T(n,m+1)).$
	For some fixed $n$, we take the least integer $m$ for which $\W_f(\T(n,m)) \ge  \W_f(\T(n,m+1))$ holds and take an extremal graph $G \in \T(n,m)$ with $\W_f(G)=\W_f(\T(n,m))$.
	
	Since $G$ is a tree which is not a path (as a path reaches the largest possible matching number), we can choose a leaf $\ell$ and a vertex $w$ of degree at least $3$ such that $d(w,\ell)$ is the smallest among all such choices.
	Considering $G$ as a rooted tree in $w$, there are at least three branches, the path $P$ from $\ell$ to $w$ being one of them.
	Let $S$ be a branch different from $P$ and $S'$ be the union of the remaining branches different from $P$ and $S$. Here we do not consider $w$ as a vertex of $S$ nor of $S'$.
	We can prune the subtree $S$ (with root $w$) and regraft it at $\ell$.
	After this operation, the set of distances between $P$ and $S$ or $S'$ is the same as before, while the distance between any vertex of $S'$ and any vertex of $S$ has increased with $d(w, \ell)$.
	Since $f$ is a strictly increasing function, this implies that $\W_f$ has strictly increased by performing the SPR operation, while the matching number has not increased with more than one. This implies that $\W_f(G)=\W_f(\T(n,m)) \ge  \W_f(\T(n,i))$ for every $i \le m+1$ was not true. This contradiction implies that the proposition is true.
\end{proof}

Let $G$ be an extremal graph and $u_0$ and $u_d$ be two vertices such that the distance between them equals the diameter and the path $\P$ between them equals $u_0u_1 \ldots u_d$.
If $G$ is the path $\P$, we are in a trivial case since $P$ itself is of the form $A_{n,m}$.
In the other case, there are some subtrees attached to the path $\P$.
The following proposition gives more information about them.

\begin{prop}\label{starpathstarlike}
	There are no vertices of $\P$ different from $u_1$ and $u_{d-1}$ having degree at least $3$.
\end{prop}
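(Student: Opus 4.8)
The plan is to argue by contradiction: suppose some internal vertex $u_i$ with $2 \le i \le d-2$ has degree at least $3$, and among all extremal graphs $G$ choose one where such a vertex exists, and among all such vertices pick one whose distance to the nearer endpoint of $\mathcal{P}$ is minimal (so that the attached subtrees at $u_i$ hang ``cleanly'' off the path). Let $S$ be the rooted subtree (rooted at $u_i$) consisting of all branches at $u_i$ other than the two path-directions $u_{i-1}$ and $u_{i+1}$; note $S$ has at least one vertex other than its root. The idea is to perform an SPR move: prune $S$ from $u_i$ and regraft it at one of the diametral endpoints, say $u_0$ (or $u_d$), choosing the endpoint so as to maximize the distance gain, and then compare $\W_f$ before and after.

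The key computation is to track how distances change. When $S$ (with $|V(S)| - 1 = s$ non-root vertices) is moved from $u_i$ to $u_0$, the distance from each vertex $x \in S \setminus \{u_i\}$ to each vertex $y \notin V(S)$ changes from $d(x,u_i) + d(u_i,y)$ to $d(x,u_i) + d(u_0,y)$, i.e. it changes by $d(u_0,y) - d(u_i,y)$; internal distances within $S$ and distances among vertices outside $S$ are unaffected. Summing $d(u_0,y) - d(u_i,y)$ over all $y \notin V(S)$, and doing the symmetric computation for regrafting at $u_d$ instead, the two candidate gains add up to something controlled by $\sum_y \big( d(u_0,y) + d(u_d,y) - 2 d(u_i,y) \big)$; since $u_i$ lies on a geodesic from $u_0$ to $u_d$ we have $d(u_0,y) + d(u_d,y) \ge d(u_0,u_d) = d$ while $d(u_i,y) \le \max\{d(u_i,u_0), d(u_i,u_d)\} \le d-1$ for $y$ outside, and a careful accounting shows that at least one of the two regraftings produces a net distance increase $\sum \big(d'(x,y) - d(x,y)\big) > 0$ while never decreasing any distance. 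Because $f$ is convex with $f(0)=0$ and strictly increasing on $\mathbb{R}^+$, an increase in the multiset of positive distances (with no distance decreasing) forces $\W_f$ to strictly increase — here one should invoke that $f$ applied termwise to a pointwise-larger tuple of nonnegative reals, with at least one strict increase among positive values, gives a strictly larger sum (convexity is not even needed for this monotonicity step, only strict monotonicity of $f$ on $\mathbb{R}^+$). This contradicts extremality of $G$.

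The one subtlety is that SPR can change the matching number, so I must check $m(G') = m(G)$, or at least $m(G') \le m(G)$ — but combined with Proposition~\ref{monotonicityTree} even the latter suffices, since if $m$ dropped we could then pass to the (strictly larger) extremal graph of the original matching number and still reach a contradiction. Concretely, moving a pendant subtree from an internal path vertex to a path endpoint cannot increase the matching number: one can show any matching of $G'$ restricts to a matching of $G$ of the same size by re-routing the at most one edge of the matching incident to $u_0$ in $G'$ (if $u_0u_1 \notin M'$ this is immediate; if $u_0u_1 \in M'$ then the corresponding edge at $u_i$ in $G$ is available).

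The main obstacle I expect is the bookkeeping in the distance-gain computation — specifically, ensuring that one of the two choices (regraft at $u_0$ vs. at $u_d$) is genuinely beneficial, which requires handling the case where $u_i$ is close to one endpoint, and being careful that vertices of $S$ that were themselves at distance contributing to the diameter are treated correctly. The convexity hypothesis on $f$, I anticipate, is not needed for this particular proposition (it will matter for pinning down the *exact* extremal graph $\A_{n,m}$ in the later arguments about the structure at $u_1$ and $u_{d-1}$), so here strict monotonicity of $f$ carries the whole argument, exactly as in the proof of Proposition~\ref{monotonicityTree}.
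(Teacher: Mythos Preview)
There is a genuine gap: the central claim that ``at least one of the two regraftings produces a net distance increase \ldots\ while never decreasing any distance'' is false. If you move $S$ from $u_i$ to $u_0$, then for any vertex $y=u_j$ with $0\le j< i/2$ (and for any vertex hanging off such a $u_j$) the distance from a non-root vertex $x\in S$ to $y$ changes by $d(u_0,y)-d(u_i,y)=j-(i-j)=2j-i<0$; symmetrically, regrafting at $u_d$ decreases distances to vertices near $u_d$. So \emph{both} candidate regraftings decrease some distances and increase others, and your monotonicity-only step (``convexity is not even needed'') cannot conclude anything. The paper's proof handles exactly this by a convexity argument: regrafting instead at $u_1$ and $u_{d-1}$, it shows the weighted inequality
\[
(d-1-i)\,\W_f(G_1)+(i-1)\,\W_f(G_2)>(d-2)\,\W_f(G),
\]
which follows because for every $v\in H$ one has $(d-1-i)d(u_1,v)+(i-1)d(u_{d-1},v)\ge(d-2)d(u_i,v)$, and then \emph{convexity} of $f$ (not just monotonicity) transfers this to $f$-values. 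Your hunch that convexity is irrelevant for this proposition is therefore mistaken.

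Your matching-number step is also broken, and for a related reason: regrafting at the leaf $u_0$ (as opposed to its neighbour $u_1$) can \emph{increase} the matching number. Concretely, take $H=P_7=u_0u_1\cdots u_6$ with a single pendant $w$ at $u_3$; then $m(G)=3$, but regrafting $w$ at $u_0$ yields $P_8$ with $m=4$. Your re-routing sketch (``if $u_0u_1\notin M'$ this is immediate'') fails precisely in the case $u_0w\in M'$ and $u_1u_2\in M'$. The paper avoids this by regrafting at $u_1$ and $u_{d-1}$: since $u_0u_1$ and $u_{d-1}u_d$ can be assumed to lie in a maximum matching of $H$, attaching $S$ at $u_1$ forces $m(G_1)=m(H)+m(S)\le m(G)$, and then Proposition~\ref{monotonicityTree} closes the argument. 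The overall SPR strategy you describe is the right one, but both the distance bookkeeping and the matching control require the specific choices the paper makes.
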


\begin{proof}
	If the proposition is not true, there is an extremal graph $G$ with a subtree $S$ connected to some $u_i$ with $1<i<d-1.$
	Let $G_1$ and $G_2$ be the graphs by pruning and regrafting $S$ at $u_1$ resp. $u_{d-1}$. 
	Let $H$ be the graph $G \backslash S,$ i.e. the graph $G$ when $S$ is pruned
	Note that every neighbour of a leaf will be in a maximum matching.
	In particular, without loss of generality, we can assume $u_0u_1$ and $u_{d-1}u_d$ are edges in a maximum matching of $H, G, G_1$ or $G_2$.
	This implies that the matching number of both $G_1$ and $G_2$ is exactly equal to $m(H)+m(S),$ while the matching number of $G$, $m(G)$, is at least equal to $m(H)+m(S)$ (and plausible one larger).
	So the matching number of both $G_1$ and $G_2$ is not larger than the matching number of $G.$
	Let $S_1$ be the copy of $S$ which is connected to $u_1$ and $S_2$ be the copy of $S$ which is connected to $u_{d-1}.$
	We will prove that 
	\begin{equation}\label{afschatting}
	(d-1-i)\W_f(G_1)+(i-1)\W_f(G_2)  >(d-2)\W_f(G).
	\end{equation}
	For every $v \in H, v' \in S$ (here we take $v'$ in $S_1$ as the vertex corresponding to the original $v'$ and similarly in $S_2$) we have
	$(d-1-i) d_{G_1}(v',v)+(i-1) d_{G_2}(v',v) \ge (d-2) d_{G}(v',v)$ since 
	$(d-1-i) d_{G}(u_1,v)+(i-1) d_{G}(u_{d-1},v) \ge (d-2) d_{G}(u_i,v).$	
	Since $f$ is convex and strictly increasing, $(d-1-i) f\left( d_{G_1}(v',v) \right)+(i-1) f \left( d_{G_2}(v',v) \right) \ge (d-2) f \left( d_{G}(v',v)\right).$  From this and the fact that it is strict for $v=u_i$, Equation~(\ref{afschatting}) follows.
	Hence at least one of the two graphs $G_1$ or $G_2$ has a larger generalized Wiener index than $G$ and so taking into account Proposition~\ref{monotonicityTree} we conclude $G$ was not extremal.\end{proof}

\begin{thr}\label{thrDankGen}
	Let $G$ be a graph of order $n$ with matching number $m.$
	When $f$ is a strictly increasing, convex function, then $\W_f(G) \le \W_f(\A_{n,m})$ with equality if and only if $G \cong \A_{n,m}$.
\end{thr}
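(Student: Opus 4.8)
The plan is to reduce an arbitrary extremal graph to the broom $\A_{n,m}$ by combining the two structural propositions already established with an analysis of the two "ends" of the diameter path. By the remarks before Proposition~\ref{monotonicityTree} we may assume the extremal graph $G$ is a tree, and by Proposition~\ref{monotonicityTree} it suffices to prove the claim among trees with matching number \emph{exactly} $m$ (any graph of order $n$ and matching number $m$ has generalized Wiener index at most that of some tree in $\T(n,m)$, and a tree with smaller matching number does strictly worse). Fix an extremal $G\in\T(n,m)$, let $u_0u_1\cdots u_d$ be a diametral path $\P$, and recall the case $G=\P$ is already handled. So assume $d\ge 3$ (forced since $m>1$) and that subtrees hang off $\P$.

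Step one: apply Proposition~\ref{starpathstarlike} to conclude that every vertex of $\P$ other than $u_1$ and $u_{d-1}$ has degree $2$ in $G$, so that all the "extra" vertices of $G$ lie in subtrees rooted at $u_1$ or at $u_{d-1}$. Step two: show each of these subtrees is in fact a set of pendant leaves. If some subtree rooted at $u_1$ contains a vertex at distance $\ge 2$ from $u_1$, pick such a vertex $x$ at maximum distance from $u_1$ inside that subtree together with its neighbour; then an SPR move that prunes the branch below the penultimate vertex and regrafts it directly onto $u_1$ (or, more cleanly, prunes a pendant subtree and re-attaches its vertices as leaves at $u_1$) strictly increases some distances — distances from the moved vertices to $u_d$ go up — while not increasing the matching number, because a maximum matching can always be chosen to use the edge $u_0u_1$ and to saturate each moved vertex's former parent via a leaf. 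Convexity and strict monotonicity of $f$ then give a strict increase of $\W_f$, contradicting extremality (again invoking Proposition~\ref{monotonicityTree} to absorb any drop in matching number). The same argument applies at $u_{d-1}$. Hence $G$ is a double broom: a path on $d+1$ vertices with $a$ pendant leaves at $u_1$ and $b$ pendant leaves at $u_{d-1}$.

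Step three: pin down the parameters. Counting vertices, $d+1+a+b=n$; computing the matching number of a double broom, $m=\big\lceil \tfrac{d-1}{2}\big\rceil+1$ when $a,b\ge 1$ (the two end-edges $u_0u_1$, $u_{d-1}u_d$ plus a maximum matching on the interior path $u_1\cdots u_{d-1}$), which forces $d=2m-2$, i.e. the central path $u_1\cdots u_{d-1}$ has exactly $2m-3$ edges and $2m-2$ interior vertices — matching the "$2m-2$" label in Figure~\ref{fig:graphDankelmann} — and $a+b=n-(2m-1)$. Step four: optimize the split $(a,b)$. Writing $\W_f$ as a function of $a$ with $a+b$ fixed, the only terms that change are those between the $a$-leaves and the $b$-leaves and between each block of leaves and the far parts of the path; a standard convexity/smoothing (a "Karamata"-style) argument shows the sum is maximized when $\{a,b\}=\{\lceil\frac{n-2m+1}{2}\rceil,\lfloor\frac{n-2m+1}{2}\rfloor\}$, i.e. $G\cong\A_{n,m}$, with strict inequality for any other split. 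This uniqueness, together with the strictness in the earlier SPR steps, yields the "equality iff $G\cong\A_{n,m}$" conclusion.

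The main obstacle I expect is Step two combined with the matching-number bookkeeping: one must argue carefully that the SPR move turning a deep subtree into pendant leaves at $u_1$ (or $u_{d-1}$) does not \emph{decrease} $\W_f$ merely because some distances might shrink, and simultaneously that it does not accidentally raise the matching number above $m$ in a way that makes Proposition~\ref{monotonicityTree} inapplicable in the wrong direction. The clean way around this is to choose the pruned subtree minimally (a single pendant edge, or a smallest non-star branch) so that only increases of distance occur, and to exhibit explicitly a maximum matching of the new graph of size $\le m$ using the end-edges plus leaf-edges; the convexity of $f$ is then only needed in its mild "strictly increasing" form here, with full convexity reserved for the balancing in Step four. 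Everything else is routine once these two points are nailed down.
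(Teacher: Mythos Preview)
Your overall strategy matches the paper's, but there are two concrete problems.

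\textbf{Step two is unnecessary, and the SPR move you describe points the wrong way.} Once Proposition~\ref{starpathstarlike} tells you that only $u_1$ and $u_{d-1}$ can carry subtrees, the fact that these subtrees are pendant stars is forced by the \emph{diameter}, not by any further SPR argument: if some vertex $w$ of the subtree at $u_1$ satisfies $d(w,u_1)\ge 2$, then $d(w,u_d)=d(w,u_1)+(d-1)\ge d+1$, contradicting that $u_0\cdots u_d$ is a diametral path. (The endpoints $u_0,u_d$ are leaves for the same reason.) This is exactly how the paper gets ``the extremal graph is a path $u_1\cdots u_{d-1}$ with $a$ pendent vertices to $u_1$ and $b$ to $u_{d-1}$'' in one line. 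Your proposed move, by contrast, regrafts deep vertices \emph{onto} $u_1$, which \emph{decreases} their distance to $u_d$ (from $d(x,u_1)+(d-1)$ down to $1+(d-1)$), the opposite of what you assert; the move would lower $\W_f$, not raise it. So the argument as written fails, but fortunately it is not needed.

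\textbf{Step three has a computational error.} In a double broom with diametral path $u_0\cdots u_d$ and $a,b\ge 1$ leaves at $u_1,u_{d-1}$, the matching number is $\big\lfloor\tfrac{d+1}{2}\big\rfloor$ (a maximum matching of $P_{d+1}$; extra leaves do not help once $u_1,u_{d-1}$ are saturated). Your formula $m=\big\lceil\tfrac{d-1}{2}\big\rceil+1$ disagrees for even $d$, and your conclusion $d=2m-2$ is off by two: the diameters compatible with matching number $m$ are $d=2m-1$ and $d=2m$, and one must argue (as the paper does) that $d=2m$ is the maximizing choice. With $d=2m$ the internal path $u_1\cdots u_{d-1}$ has $2m-1$ vertices and $2m-2$ edges, which is the ``$2m-2$'' in Figure~\ref{fig:graphDankelmann}; your arithmetic around that label does not add up.

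Step four is fine in spirit; the paper carries it out by writing $\W_f(G)$ explicitly and observing that with $a+b$ fixed only the term $ab\,(f(d)-f(2))$ varies, maximized when $|a-b|\le 1$. Your Karamata phrasing would work too. Fix the two issues above and the argument goes through.
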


\begin{proof}
	As a consequence of Proposition~\ref{starpathstarlike}, the extremal graph is a path $u_1u_2u_3 \ldots u_{d-1}$ with $a$ pendent vertices to $u_1$ and $b$ pendent vertices to $u_{d-1}$, where $a,b \ge 1$.
	The maximum of the generalized Wiener index $\W_f$ for such a graph with matching number equals $m$ clearly needs a diameter being equal to $2m$ if $n \ge 2m+1$ and is the path if $n=2m.$ Since 
	$$\W_f(G)=\W_f(P_{2m-1})+(a+b) \sum_{i=1}^{d-1} f(i) + \frac{(a+b)(a+b-1)}{2} f(2) + ab \left( f(d)-f(2) \right)$$
	with $f$ strictly increasing, i.e. $f(d)-f(2)>0$, the maximum occurs when $\lvert a-b \rvert \le 1$ as we are considering the nontrivial case with $d \ge 3$ (as $m \ge 2$) and $a+b=n-(2m-1)$ being fixed. 
\end{proof}

As an immediate corollary, we determine the extremal graphs attaining the maximum generalized Wiener index among all graphs having order $n$ and independence number $\alpha$ for some regime.

\begin{thr}
	If $G$ is a connected graph with independence number $n-1 \ge \alpha \ge \frac{n}{2}$, then $\W_f(G)\le \W_f(\A_{n,n-\alpha})$, with equality if and only if $G=\A_{n,n-\alpha}.$
\end{thr}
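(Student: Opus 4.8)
The plan is to deduce this corollary from Theorem~\ref{thrDankGen} via the classical Gallai-type relation between the independence number and the matching number. First I would recall that for any connected graph $G$ on $n$ vertices, König--Egerváry considerations give $\alpha(G) + \tau(G) = n$, where $\tau(G)$ is the vertex cover number, and moreover $m(G) \le \tau(G)$ always, with the Gallai identity $\alpha(G) \ge n - m(G)$ (equivalently $m(G) \ge n - \alpha(G)$) holding for every graph. So a connected graph with independence number $\alpha$ has matching number $m(G) \ge n - \alpha$. Writing $m^\ast = n - \alpha$, the hypothesis $\frac{n}{2} \le \alpha \le n-1$ translates to $1 \le m^\ast \le \frac{n}{2}$, so $m^\ast$ is an admissible matching number and, crucially, $\A_{n,m^\ast}$ itself has independence number exactly $\alpha$ (its maximum matching has size $m^\ast$ and equality holds in the Gallai bound for this double broom), so the extremal graph lies in the right class.

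Next I would combine monotonicity with the matching-number theorem. By Theorem~\ref{thrDankGen}, $\W_f(G) \le \W_f(\A_{n, m(G)})$ for any $G$ of order $n$. Since $m(G) \ge m^\ast = n-\alpha$, and since $\W_f(\A_{n,\cdot})$ is strictly increasing in the matching number on the admissible range (this is exactly the content of Proposition~\ref{monotonicityTree} applied to the specific extremal trees $\A_{n,m}$, or can be read off directly from the closed formula in the proof of Theorem~\ref{thrDankGen}), we get $\W_f(\A_{n,m(G)}) \ge \W_f(\A_{n,n-\alpha})$ going the \emph{wrong} way. So the naive chain does not close, and the actual argument must run the other direction: a graph with independence number exactly $\alpha$ cannot have too \emph{large} a matching number, it can only have $m(G) = n - \alpha$ when... no. Let me restate: the correct bound is that $G$ with independence number $\ge \alpha$ has $m(G) \le n - \alpha$? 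That is false in general. The resolution is that we want an \emph{upper} bound on $\W_f$, so we need an \emph{upper} bound on $m(G)$; and indeed if $\alpha(G) \ge \alpha$ then a maximum independent set of size $\alpha$ leaves only $n - \alpha$ vertices to cover all edges, so $\tau(G) \le n - \alpha$, hence $m(G) \le \tau(G) \le n - \alpha$. Thus $m(G) \le n - \alpha = m^\ast$, and by Theorem~\ref{thrDankGen} together with the monotonicity (Proposition~\ref{monotonicityTree}), $\W_f(G) \le \W_f(\A_{n,m(G)}) \le \W_f(\A_{n,n-\alpha})$.

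For the equality characterization I would trace both inequalities: equality in the first forces $G \cong \A_{n,m(G)}$ by the uniqueness clause of Theorem~\ref{thrDankGen}, and equality in the second forces $m(G) = n - \alpha$ by the strictness in Proposition~\ref{monotonicityTree}; together these give $G \cong \A_{n,n-\alpha}$, and one checks this graph indeed has independence number $\alpha$, closing the loop. The main obstacle is purely bookkeeping: being careful that the double broom $\A_{n,n-\alpha}$ actually attains independence number $\alpha$ (so the bound is tight and the extremal graph is in the stated class) and that the range $\frac{n}{2} \le \alpha \le n-1$ is exactly what makes $m^\ast = n-\alpha$ lie in $\{1,\dots,\lfloor n/2\rfloor\}$ so that $\T(n,m^\ast)$ is nonempty and Theorem~\ref{thrDankGen} applies; no genuinely new idea beyond the Gallai identity and the two already-proven statements is needed. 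I would also remark, as the paper does for Chung's result, that specializing $f$ recovers known independence-number versions for the Wiener and hyper-Wiener indices.
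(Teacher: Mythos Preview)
Your argument is correct and is essentially the paper's proof: you use $m(G)\le \tau(G)=n-\alpha$, then combine Theorem~\ref{thrDankGen} with the monotonicity from Proposition~\ref{monotonicityTree} to get $\W_f(G)\le \W_f(\A_{n,m(G)})\le \W_f(\A_{n,n-\alpha})$, and finally verify that $\A_{n,n-\alpha}$ has independence number $\alpha$. The paper states the key inequality $\alpha+m\le n$ directly (each matching edge meets the complement of an independent set in at least one vertex), which is exactly your $m\le\tau=n-\alpha$ rephrased.

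One presentational remark: the detour in your second paragraph---first asserting $m(G)\ge n-\alpha$, observing the chain goes the wrong way, then asking whether $m(G)\le n-\alpha$ is ``false in general'' before proving it---should be pruned. The statement $m(G)\le n-\alpha(G)$ is simply true for every graph (it is the content of your own vertex-cover argument two lines later), so there is no genuine obstacle, only a momentary disorientation. A clean write-up would start from $m\le n-\alpha$ immediately, as the paper does.
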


\begin{proof}
	Note that for any graph, the sum $\alpha + m \le n$, since given an independent set $I$ and a matching $M$, any edge of $M$ contains at least one vertex which is not in $I.$
	This implies that $m \le n - \alpha \le \frac n2.$
	Applying Proposition~\ref{monotonicityTree} (recall extremal graphs with respect to $m$ are trees) and Theorem~\ref{thrDankGen}, we have that $\W_f(G) \le \W_f(\A_{n,n-\alpha})$. Since the graph $\A_{n,n-\alpha}$ has independence number $\alpha$, it is the unique extremal graph.
\end{proof}

In the case $2\le \alpha < \frac n2$, the proof of Dankelmann~\cite{D} can be extended to $\W_f$, as well. In that case the extremal graph being the balanced dumbbell graph $\D_{n, \alpha}$ of diameter $2 \alpha -1.$ This is the graph obtained when connecting two vertices from two cliques of almost equal order $\lceil \frac n2 \rceil -\alpha +2$ and $ \lfloor \frac n2 \rfloor -\alpha +2$ by a path of length $2\alpha-3$.

\begin{figure}[h]
	\centering
	\begin{tikzpicture}[thick,scale=0.9]%
	\draw \foreach \a in {0} \foreach \x in {180} {
		(\x:2)--(\x+\a:2.5) node {}
	};
	\draw[dotted] \foreach \x in {0} {
		(-1.5,0)--(3.75,0)  
	};
	\draw 	(0:3.25)--  (0:3.75)  node {};
	\draw 	(-2,0)--  (-1.5,0)  node {};

	\draw \foreach \a in {0} \foreach \x in {0} {
		(\x:3.75)--(\x+\a:4.25) node {}
	};

	\coordinate [label=center:$2\alpha-3$] (A) at (0.875,0.2);

	\coordinate [label=right:$K_{\lfloor \frac{n-(2\alpha-2)}{2} \rfloor }$] (A) at (0:4.5); 
	\coordinate [label=left:$K_{\lceil \frac{n-(2\alpha-2)}{2} \rceil }$] (A) at (180:2.6);

	\draw	(3.25,0)  node {};
	\draw	(-2,0)  node {};	
	\draw	(3.75,0)  node {};
	\draw [fill=xdxdff] (-2.5,0.) circle (4.5pt);	
	\draw [fill=xdxdff] (4.25,0.) circle (4.5pt);	
	\end{tikzpicture}
	\caption{Extremal graph $\D_{n,\alpha}$}
	\label{fig:graphDankelmann_alpha}
\end{figure}
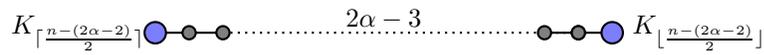

\begin{thr}
	If $G$ is a connected graph with independence number $2\le  \alpha < \frac{n}{2}$, then $\W_f(G)\le \W_f(\D_{n,\alpha})$, with equality if and only if $G=\D_{n,\alpha}.$
\end{thr}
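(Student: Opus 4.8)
The plan is to carry Dankelmann's proof for the ordinary Wiener index~\cite{D} over to $\W_f$, the only new ingredient being that the convexity and strict monotonicity of $f$ replace the linearity of $d\mapsto d$ wherever~\cite{D} uses it — exactly the substitution already performed in Propositions~\ref{monotonicityTree} and~\ref{starpathstarlike} and in Theorem~\ref{thrDankGen}. Note first that the reuse of Theorem~\ref{thrDankGen} via the bound $\alpha+m\le n$ is not available here: for $\alpha<\frac n2$ the extremal graph has cycles, so it is not governed by the tree results of this section and a genuine argument is needed.

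First I would record the basic diameter bound: if $\alpha(G)=\alpha$ then $\operatorname{diam}(G)\le 2\alpha-1$, because on a shortest path $u_0u_1\cdots u_d$ the vertices $u_0,u_2,u_4,\dots$ are pairwise non-adjacent and hence form an independent set of size $\lceil(d+1)/2\rceil\le\alpha$. Next, since deleting an edge of a connected graph strictly increases $\W_f$ (the distance between its endpoints jumps from $1$ to at least $2$) and never decreases the independence number, an extremal $G$ must be edge-critical: for every edge $e$, $G-e$ is disconnected or has $\alpha(G-e)>\alpha$. Fixing a diametral path $u_0\cdots u_d$ of $G$ and passing to the BFS layers $L_0,\dots,L_d$ from $u_0$, the path meets each layer in exactly one vertex, and vertices in layers whose indices differ by at least $2$ are non-adjacent; hence a transversal of the even-indexed layers (respectively of the odd-indexed ones) is independent, and when $d=2\alpha-1$ each such transversal has size $\alpha$ and is therefore a maximum independent set. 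From this one deduces that every layer $L_i$ is a clique (two non-adjacent vertices in $L_i$ would extend a transversal avoiding $L_i$).

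The combinatorial heart of the argument, and the step I expect to be the main obstacle, is to upgrade this to the exact dumbbell shape. One has to show that the ``excess'' vertices (those beyond a single diametral path) cannot occupy interior layers, and in an extremal graph must coalesce into the two end-cliques, forcing $G$ to be $K_a$ joined to $K_b$ by a path of length $2\alpha-3$ with $a+b=n-2\alpha+4$ and $\operatorname{diam}(G)=2\alpha-1$. This is precisely where the two constraints interact: the independence bound forbids those vertices from being placed so as to create a larger independent set, while extremality of $\W_f$ forces them outward — implemented by SPR-type moves as in Proposition~\ref{starpathstarlike}, where convexity of $f$ guarantees that the relevant weighted combination of $f$-values does not drop when a vertex is moved to a layer nearer an end, and strict monotonicity produces a net gain. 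One must also check that a diameter smaller than $2\alpha-1$ is never extremal (a connected graph with $\alpha(G)=\alpha$ and smaller diameter can be modified, preserving $\alpha$, to lengthen some distance) and handle the small case analysis at the two junctions where the path meets the cliques.

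Finally, once $G$ is known to be of the form $K_a$ — $P_{2\alpha-3}$ — $K_b$ with $a+b=n-2\alpha+4$ fixed, I would expand $\W_f(G)$ in the spirit of the display in the proof of Theorem~\ref{thrDankGen}: the distances inside the path, the $n-2\alpha+4$ edges of the two cliques contributing $\binom a2 f(1)+\binom b2 f(1)$, the distances from clique vertices along the path, and the cross-distances, which split into a part of the form $(a-1)(\cdot)+(b-1)(\cdot)$ (constant since $a+b$ is fixed) plus $(a-1)(b-1)f(2\alpha-1)$. Collecting terms gives
$$\W_f(G)=C+ab\bigl(f(2\alpha-1)-f(1)\bigr),$$
where $C$ depends only on $n$ and $\alpha$; since $f$ is strictly increasing and $2\alpha-1\ge 3$, this is maximized precisely when $ab$ is largest, i.e.\ when $|a-b|\le 1$, which is exactly $G=\D_{n,\alpha}$. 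Uniqueness follows because every inequality invoked along the way is strict unless $G$ is already $\D_{n,\alpha}$.
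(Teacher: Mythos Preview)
Your proposal is in complete agreement with the paper's treatment: the paper does not actually supply a proof of this theorem but simply asserts that ``the proof of Dankelmann~\cite{D} can be extended to $\W_f$, as well,'' and your sketch is precisely an outline of how that extension goes, with convexity and strict monotonicity of $f$ standing in for linearity exactly as you say. Your final balancing computation $\W_f(G)=C+ab\bigl(f(2\alpha-1)-f(1)\bigr)$ is correct (note the minor slip ``$n-2\alpha+4$ edges'' should read ``vertices''), and the structural steps you flag as the main obstacles are indeed the substantive parts of Dankelmann's original argument that one has to re-verify under $\W_f$.
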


As corollaries, we get power mean versions of the result of Chung~\cite{C}, which states that the average distance is bounded by the independence number.

\begin{thr}\label{alg_Chung}
	Let $\mu_j(G)$ be the $j^{th}$ power mean of the distances $\left\{d(u,v)\right\}_{\{u,v\}\subset V(G)}$. 
	Then for $j\ge 1$ and any connected graph $G$, one has $\mu_j(G) \le \M_j\left(2\alpha(G)-1,1\right).$
\end{thr}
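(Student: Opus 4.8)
The plan is to recognise $\mu_j(G)$ as built from a generalized Wiener index and then convexify. For $j\ge 1$ the map $f(x)=x^j$ is convex, strictly increasing on $\mathbb{R}^+$, and vanishes at $0$, so $\sum_{\{u,v\}\subset V(G)} d(u,v)^j=\W_{x^j}(G)$ is a generalized Wiener index and $\mu_j(G)^j=\W_{x^j}(G)/\binom n2$. Hence Theorem~\ref{thrDankGen} and its dumbbell counterpart already pin down, for each $n$ and $\alpha$, a single graph $G^\ast$ (a double broom $\A_{n,n-\alpha}$ when $\alpha\ge n/2$, a balanced dumbbell $\D_{n,\alpha}$ otherwise) that maximizes \emph{every} $\W_{x^j}$ among connected graphs of order $n$ with independence number $\alpha$; this is what makes the statement a corollary. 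I would nonetheless argue directly, since only two ingredients are needed: (i) the classical bound $d(G)\le 2\alpha(G)-1$ for a connected graph — every other vertex on a diametral path is an independent set of size $\lceil(d(G)+1)/2\rceil$ — and (ii) Chung's inequality $\mu(G)\le\alpha(G)$, which is precisely the $j=1$ instance of the present statement (and is itself recoverable from the theorems above by taking $f=\Id$).

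Set $D:=2\alpha(G)-1$, so that every distance $d(u,v)$ lies in $[1,D]$. If $D=1$ then $G$ is complete, all distances equal $1$, and $\mu_j(G)=1=\M_j(1,1)$; so one may assume $D\ge 2$. Because $x\mapsto x^j$ is convex it lies below its chord on $[1,D]$, i.e. $x^j\le 1+\frac{D^j-1}{D-1}(x-1)$ for $x\in[1,D]$. Taking $x=d(u,v)$, summing over the $\binom n2$ pairs and dividing by $\binom n2$ then leads to
\[
\mu_j(G)^j\ \le\ 1+\frac{D^j-1}{D-1}\bigl(\mu(G)-1\bigr).
\]
It then remains to feed in Chung's bound $\mu(G)\le\alpha(G)=\frac{D+1}{2}$, i.e. $\mu(G)-1\le\frac{D-1}{2}$; since $\frac{D^j-1}{D-1}\ge 0$ this yields $\mu_j(G)^j\le 1+\frac{D^j-1}{2}=\frac{D^j+1}{2}=\M_j(D,1)^j$, and taking $j$-th roots (legitimate: both sides are positive and $t\mapsto t^{1/j}$ is increasing) gives $\mu_j(G)\le\M_j(2\alpha(G)-1,1)$.

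There is essentially no serious obstacle here: the whole content is the diameter–independence bound combined with a one-line chord estimate. The only points requiring care are the degenerate case $\alpha(G)=1$ (disposed of above) and the entitlement to use $\mu(G)\le\alpha(G)$ — which is granted either by citing Chung directly or by reading off the $j=1$ case from Theorem~\ref{thrDankGen} and the dumbbell theorem. If one wants the deduction to flow strictly from those two theorems, one can instead apply the chord estimate to the extremal graph $G^\ast$ rather than to $G$: then $\mu_j(G)\le\mu_j(G^\ast)$, and $G^\ast$ again has all distances in $[1,2\alpha-1]$ with mean distance at most $\alpha$, so the identical computation applies.
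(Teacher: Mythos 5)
Your proof is correct, but it follows a genuinely different route from the paper's. The paper stays inside the machinery of Section~2: it invokes the extremal-graph theorems to reduce to $\A_{n,n-\alpha}$ or $\D_{n,\alpha}$, and then bounds $\mu_j$ of that specific graph by pairing, for each $1\le i\le\alpha-1$, the distance values $i$ and $2\alpha-i$ (using that the extremal graph has at least as many pairs at distance $i$ as at distance $2\alpha-i$) together with $\M_j(1,2\alpha-1)\ge\M_j(i,2\alpha-i)$. You instead bypass the extremal graphs entirely: the chord estimate $x^j\le 1+\frac{D^j-1}{D-1}(x-1)$ on $[1,D]$ with $D=2\alpha-1$ (justified by the standard diameter bound $d(G)\le 2\alpha(G)-1$) reduces the whole statement to the $j=1$ case, i.e.\ to Chung's classical inequality $\mu(G)\le\alpha(G)$, which is a legitimately citable prior result (it is reference~\cite{C} of the paper, not something proved only later). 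Your argument is shorter and more elementary, and it cleanly isolates what the $j$-th power mean bound really rests on (diameter bound plus mean-distance bound); what it gives up is the structural information — the paper's route identifies the maximizers of $\W_{x^j}$, which is the actual content of Section~2, and its distance-pairing argument is what one would need anyway to re-derive Chung's bound on the extremal graphs rather than citing it. Two small points of care, both of which you handle or at least flag: the case $D=1$ must be split off since the chord has denominator $D-1$; and your closing suggestion to run the chord estimate on $G^\ast$ instead of $G$ does not actually remove the dependence on Chung's inequality, since "mean distance of $G^\ast$ at most $\alpha$" is again that inequality (now for one specific graph, where it can of course be verified directly).
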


\begin{proof}
	The function $f \colon x \mapsto x^j $ is a convex function on $\mathbb R^+$ when $j \ge 1$.
	Note that for this $f$, we have $\mu_j(G) = \sqrt[j]{\frac{\W_f{G}}{\binom n2}}.$
	So from the previous two theorems, we know the maximum is attained when $G$ equals $\A_{n,n- \alpha}$ or $\D_{n, \alpha}.$
	Note that for every $1 \le i \le \alpha -1$, there are more pairs of vertices $\{u,v\}$ with $d(u,v)=i$ then pairs with $d(u,v)=2\alpha -i$ in the extremal graph. Combining with $\M_j(1,2\alpha-1) \ge \M_j(i, 2\alpha-i)$ (true by the inequality of Jensen), we conclude. 
\end{proof}

By the observation that $(2\alpha-1)^j+1 \le (2 \alpha)^j$ when $j \ge 1$, we also have the following corollary. Note that for $j=1$, it is exactly the original result of Chung.

\begin{cor}\label{genChung}
	For every $j \ge 1$ and graph $G$, we have $\mu_j(G)\le 2^{ \frac{j-1}{j}} \alpha{(G)}.$ Equality holds if and only if $j=\alpha(G)=1$.
\end{cor}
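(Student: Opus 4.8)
The plan is to obtain the bound directly from Theorem~\ref{alg_Chung} together with the elementary inequality flagged just before the statement, and then analyse when each of the two inequalities used is tight. Write $\alpha := \alpha(G)$ and let $n := |V(G)|$. First I would record that for every real $j \ge 1$ and every integer $\alpha \ge 1$,
$$(2\alpha-1)^{j} + 1 \le (2\alpha)^{j},$$
with equality if and only if $j = 1$: for integral $j$ this is the binomial theorem, and for real $j > 1$ it follows since $g(t):=(t+1)^{j}-t^{j}$ satisfies $g'(t)=j\bigl[(t+1)^{j-1}-t^{j-1}\bigr]>0$ on $[0,\infty)$, hence $g(2\alpha-1)>g(0)=1$ because $2\alpha-1\ge 1>0$. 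Dividing by $2$ and taking $j$-th roots rewrites this as $\M_j(2\alpha-1,1)\le 2^{(j-1)/j}\alpha$, so chaining with $\mu_j(G)\le\M_j(2\alpha-1,1)$ from Theorem~\ref{alg_Chung} gives $\mu_j(G)\le 2^{(j-1)/j}\alpha$, which is the asserted bound.

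For the equality statement, suppose $\mu_j(G)=2^{(j-1)/j}\alpha$. Then both inequalities in the chain are equalities; in particular $(2\alpha-1)^{j}+1=(2\alpha)^{j}$, which by the first paragraph forces $j=1$. With $j=1$ the bound reads $\mu_1(G)\le\alpha$ (Chung's inequality), so equality means $\mu_1(G)=\alpha$. At this point I would return to the pairing argument inside the proof of Theorem~\ref{alg_Chung}: if $\alpha\ge 2$, the Wiener-maximal graph on $n$ vertices with independence number $\alpha$ --- namely $\A_{n,n-\alpha}$ if $\alpha\ge n/2$ and $\D_{n,\alpha}$ if $\alpha<n/2$ --- has strictly more pairs at distance $1$ than at distance $2\alpha-1$, so when one matches distance-$i$ pairs with distance-$(2\alpha-i)$ pairs ($1\le i\le\alpha-1$) the surplus pairs at distance $1<\alpha$ stay unmatched and force the mean distance of that extremal graph strictly below $\alpha$; since $\mu_1(G)$ does not exceed the mean distance of this extremal graph, $\mu_1(G)<\alpha$, a contradiction. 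Hence $\alpha=1$, i.e.\ $G$ is a complete graph; conversely, if $j=\alpha(G)=1$ then all distances in $G$ equal $1$, so $\mu_j(G)=1=2^{0}\cdot 1$ and equality holds.

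The computational steps --- the convexity/binomial estimate and the rewriting of $\M_j$ --- are routine. The one point that needs a little care is the equality case, where I must rule out $\mu_1(G)=\alpha$ for $\alpha\ge 2$; rather than reprove the sharpness of Chung's theorem from scratch, I would lean on the structural description of the extremal graphs and the pairing bound already established earlier in this section.
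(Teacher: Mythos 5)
Your proof is correct and follows essentially the same route as the paper, which derives the corollary from Theorem~\ref{alg_Chung} via the observation $(2\alpha-1)^j+1\le(2\alpha)^j$ for $j\ge 1$. Your careful treatment of the equality case (forcing $j=1$ from strictness of that inequality, then ruling out $\mu_1(G)=\alpha$ for $\alpha\ge 2$ via the surplus of distance-$1$ pairs in the extremal graphs) is sound and actually more detailed than what the paper records.
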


\section{Maximum external Wiener index of graphs}\label{sec:3}

In this section, we give a short alternative proof for the main result in~\cite{DIS}, which proves Conjecture $11$ in~\cite{GFD16}. We show that the conjecture is basically a corollary of a theorem in~\cite{GFP}.

Remark that if $T$ is a spanning tree of a graph $G$, then $\W_{ex}(G)\le \W_{ex}(T)$ since $\deg_T(u) \le \deg_G(u)$ and $d_G(u,v) \le d_T(u,v)$.

Note that for any two vertices $u$ and $v$ in a tree, there are two leaves such that the path between them goes through $u$ and $v$.
This implies that adding an edge between two non-neighbours of any tree will strictly decrease the external Wiener index as at least one term got smaller (or even vanishes). As a consequence, any extremal graph is a tree.

Hence the result will follow from the following lemma, as we know the extremal graphs are trees.

\begin{lem}\label{keylem}
	Let $T$ be a tree of order $n$ with $\ell$ leaves.
	Then 
	\begin{align*}
	\sum_{u,v \in V(G): \deg(u)>1, \deg(v)=1} d(v,u) &\le \ell \frac{(n-\ell)(n-\ell+1)}{2},\\
	\TW(T) &\le \ell(\ell-1)+ \left \lfloor \frac{\ell^2}{4} \right \rfloor (n-\ell -1).
	\end{align*}
\end{lem}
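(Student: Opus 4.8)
The plan is to prove the two inequalities by a vertex-counting / level-structure argument, partitioning the contribution to each sum according to the distance of a leaf from the set of internal vertices. First I would set $k = n - \ell$ for the number of internal (non-leaf) vertices. For the first inequality, fix a leaf $v$. For each internal vertex $u$, the distance $d(v,u)$ is at least $1$, and the internal vertices lie at pairwise distinct ``levels'' along the tree as seen from $v$ only in the best case; the sharp bound comes from observing that the multiset $\{d(v,u) : \deg(u) > 1\}$ is dominated (in the majorization sense for a sum) by $\{1, 2, \ldots, k\}$. Concretely, I would argue that for any tree and any leaf $v$, one can order the internal vertices $u_1, \ldots, u_k$ so that $d(v, u_j) \le j$: this follows because the internal vertices together with $v$ induce a subtree, and a BFS from $v$ within that subtree reaches its $j$-th new internal vertex at depth at most $j$. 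Hence $\sum_{u : \deg(u)>1} d(v,u) \le 1 + 2 + \cdots + k = \frac{k(k+1)}{2} = \frac{(n-\ell)(n-\ell+1)}{2}$, and summing over all $\ell$ leaves gives the first displayed bound.

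For the second inequality I would split $\TW(T) = \sum_{v,w \text{ leaves}} d(v,w)$ (sum over unordered pairs, doubled appropriately — I would be careful with the factor of $2$ since the statement's constant $\ell(\ell-1)$ suggests the ordered-pair convention or an explicit accounting). Write $d(v,w) = d_{\text{int}}(v,w) + 2$ whenever $v,w$ are distinct leaves with no internal vertex ``between'' them is false in general, so instead: every leaf is adjacent to exactly one internal vertex (its \emph{support} vertex $s(v)$), and $d(v,w) = d(s(v), s(w)) + 2$ when $s(v) \ne s(w)$, while $d(v,w) = 2$ when $s(v) = s(w)$. Thus $\TW(T) = \binom{\ell}{2}\cdot 2 + \sum_{\{v,w\}: s(v) \ne s(w)} d(s(v), s(w))$ (counting unordered pairs). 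The first term is $\ell(\ell-1)$, matching the leading term in the bound. For the second term, group leaves by their support vertex: if support vertex $s$ carries $a_s$ leaves, then $\sum_{\{v,w\}} d(s(v),s(w)) = \sum_{\{s,t\}} a_s a_t \, d(s,t)$, a sum over pairs of \emph{internal} vertices weighted by the products of leaf-counts, with $\sum_s a_s = \ell$.

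The crux is then bounding $\sum_{\{s,t\}} a_s a_t\, d(s,t)$ by $\lfloor \ell^2/4 \rfloor (k-1)$ where $k$ = number of internal vertices that are support vertices (so $k \le n - \ell$). Here I would use that $d(s,t) \le k - 1$ for internal vertices of the subtree they span, but that alone gives $\binom{\ell}{2}(k-1)$, too weak. Instead, I expect the key trick — and this is the main obstacle — is to fix a diametral path among the support vertices and use a cut/edge-decomposition: $\sum_{\{s,t\}} a_s a_t \, d(s,t) = \sum_{e \in E(T')} (\text{leaves on one side of } e)(\text{leaves on the other side})$ where $T'$ is the subtree on support vertices, and each edge contributes a product of two nonnegative integers summing to $\ell$, hence at most $\lfloor \ell^2/4 \rfloor$; since $T'$ has at most $k - 1 \le n - \ell - 1$ edges, the bound $\lfloor \ell^2/4\rfloor (n-\ell-1)$ follows. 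Combining with the $\ell(\ell-1)$ term completes the proof. I would double-check the edge count and the boundary cases ($\ell = n-1$, stars, double brooms) to confirm the floor and the ``$-1$'' are tight, and I would verify the majorization claim in the first part handles the degenerate case $k=0$ (no internal vertices, i.e. $n \le 2$) vacuously.
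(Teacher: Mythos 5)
Your proof of the first inequality is essentially the paper's: both arguments observe that the non-leaf set $U$ induces a subtree, so $G[U\cup\{v\}]$ is connected on $n-\ell+1$ vertices, whence the distances from the leaf $v$ to $U$ are dominated by $1,2,\dots,n-\ell$; summing over the $\ell$ leaves gives the bound. For the second inequality the paper does not argue at all — it simply cites Theorem~4 of the reference \cite{GFP} (noting it extends to $\ell\in\{2,3\}$) — whereas you supply a complete self-contained proof: reduce to support vertices via $d(v,w)=d(s(v),s(w))+2$, which produces the $\ell(\ell-1)=2\binom{\ell}{2}$ term, and then bound $\sum_{\{s,t\}}a_sa_t\,d(s,t)$ by the standard edge-cut decomposition, each edge contributing $L_e(\ell-L_e)\le\lfloor\ell^2/4\rfloor$ over at most $n-\ell-1$ edges. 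That argument is correct and has the merit of handling all $\ell\ge 2$ uniformly and of exhibiting exactly when equality holds (balanced cuts at every internal edge, i.e.\ balanced double brooms), which is what the surrounding theorem needs. Two small points to tidy: the minimal subtree $T'$ spanning the support vertices may contain internal Steiner vertices that support no leaf, so its edge count is bounded by $n-\ell-1$ (the number of internal edges of $T$) rather than by ``(number of support vertices)$-1$'' as you wrote — only the former bound is needed and it is the correct one; and the diametral path you mention is superfluous, the cut decomposition alone does the job. The degenerate case $n=2$ (where a leaf's neighbour is again a leaf) escapes your support-vertex decomposition but is checked directly.
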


\begin{proof}
	Let $U=\{u \in V(G), \deg(u)>1\}$ be the sets of nonleafs, which has size $n-\ell.$
	Note that  for every leaf $v$, $G[U \cup \{v\}]$ is a connected graph and hence $$\sum_{U} d(v,u) \le \sum_{i=1}^{n-\ell} i = \frac{(n-\ell)(n-\ell+1)}{2}.$$
	Equality holds if and only if $G[U]$ is a path and $v$ is connected to an endvertex of $G[U]$.
	The second part is Theorem 4 of \cite{GFP}, with the addition that it is also true for $\ell \in \{2,3\}.$
\end{proof}

\begin{thr}
	The graphs on $n$ vertices with the maximum external Wiener index $\W_{ex}$ are balanced double brooms.
\end{thr}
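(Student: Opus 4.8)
The plan is to derive the theorem from Lemma~\ref{keylem}, using that an extremal graph is a tree. Let $T$ be a tree on $n$ vertices with $\ell$ leaves and let $U$ be its set of non-leaves, $|U|=n-\ell$. Split the pairs contributing to $\W_{ex}(T)$ into those consisting of two leaves and those consisting of one leaf and one non-leaf:
\[\W_{ex}(T)=\sum_{\substack{u,v\in V(T):\\ \deg(u)>1,\ \deg(v)=1}} d(u,v)\ +\ \TW(T).\]
The two inequalities of Lemma~\ref{keylem} bound the two summands, so $\W_{ex}(T)\le g(\ell)$ where
\[g(\ell):=\ell\cdot\frac{(n-\ell)(n-\ell+1)}{2}+\ell(\ell-1)+\left\lfloor\frac{\ell^{2}}{4}\right\rfloor(n-\ell-1),\]
and hence $\max\{\W_{ex}(G):|V(G)|=n\}\le\max_{2\le\ell\le n-1}g(\ell)$.

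Next I would show that this bound is attained only by balanced double brooms. Consider the double broom obtained from a path $u_1u_2\cdots u_k$ (with $k=n-\ell\ge2$) by attaching $a\ge1$ pendant vertices to $u_1$ and $b\ge1$ pendant vertices to $u_k$, where $a+b=\ell$. Each pendant vertex is at distances $1,2,\dots,k$ from the $k$ non-leaves, so the leaf/non-leaf part equals $\ell\cdot\frac{k(k+1)}{2}$; two pendant vertices on the same side are at distance $2$ and two on opposite sides at distance $k+1$, so $\TW=2\binom a2+2\binom b2+ab(k+1)=\ell(\ell-1)+ab(k-1)$. Therefore the external Wiener index of this double broom equals $\ell\cdot\frac{(n-\ell)(n-\ell+1)}{2}+\ell(\ell-1)+ab(n-\ell-1)$, which is exactly $g(\ell)$ precisely when $ab$ is largest possible, i.e.\ when $|a-b|\le1$: the balanced double broom with $\ell$ leaves. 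Picking $\ell=\ell^{*}$ maximising $g$ thus produces a balanced double broom with $\W_{ex}=\max_\ell g(\ell)$, so equality holds throughout the chain above.

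It remains to see that every extremal tree $T$ is of this form. Extremality forces equality everywhere, so $\ell(T)$ maximises $g$ and $T$ attains equality in both parts of Lemma~\ref{keylem}. Equality in the first part forces, for every leaf $v$, that $T[U]$ is a path and $v$ is adjacent to one of its two endpoints (this is the equality case of the estimate in the proof of Lemma~\ref{keylem}); since the endpoints of the path $T[U]$ lie in $U$, each carries at least one pendant vertex, so $T$ is a double broom. Equality in the second part then forces $ab=\lfloor\ell^2/4\rfloor$, i.e.\ the two brooms are balanced. Finally, analysing $g$ (the floor is handled by comparing $g(\ell)$ with $g(\ell+2)$) shows its maximiser $\ell^{*}$ lies strictly inside $\{2,\dots,n-1\}$ — consistent with the cubic leading term $\frac14\ell(n-\ell)(2n-\ell)$ of $g$, so $\ell^{*}\approx(1-\frac{1}{\sqrt3})n$ — hence $k=n-\ell^{*}\ge2$ and the degenerate star case is excluded; so $T$ is a genuine balanced double broom, and the few small values of $n$ are checked directly.

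The main obstacle is the equality bookkeeping: turning ``equality in Lemma~\ref{keylem}'' into the structural statement that $T$ is a double broom, and then checking that balancedness of its two brooms is exactly what equality in the $\TW$-bound requires. The optimisation of $g(\ell)$ is elementary but needs care because of the floor function and because the boundary cases (star and near-path) must be excluded.
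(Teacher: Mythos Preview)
Your proof is correct and follows essentially the same route as the paper: reduce to trees, split $\W_{ex}(T)$ into the leaf/non-leaf sum plus $\TW(T)$, bound each by Lemma~\ref{keylem}, and read off from the equality cases that the extremal tree must be a balanced double broom. The paper's write-up is terser (it does not explicitly recompute $\W_{ex}$ of a double broom, nor does it locate the maximiser $\ell^{*}$ of $g$---the theorem only claims the extremal graphs are balanced double brooms, so identifying $\ell^{*}$ and excluding the star are not needed), but your added detail is all correct.
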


\begin{proof}
	Assume $T$ is the extremal graph and it has $\ell$ leaves.
	Note that $$\W_{ex}(T) = \sum_{u,v \in V(G): \deg(u)>1, \deg(v)=1} d(v,u) + \TW(T)$$ is bounded by $$\ell \frac{(n-\ell)(n-\ell+1)}{2}+ \ell(\ell-1)+ \left \lfloor \frac{\ell^2}{4} \right \rfloor (n-\ell -1)$$ due to Lemma~\ref{keylem}. Equality in the first part holds if and only if $T$ is a double broom. A double broom for which equality holds in the second equality need to be balanced and balanced double brooms attain equality.
	The maximum among all graphs is now attained by the double brooms having $\ell$ leaves, where $2 \le \ell\le n-1$ is an integer maximizing the expression.
\end{proof}

\section{Maximum Wiener index of unicyclic graphs with given bipartition}\label{sec:4}

In this section, we show that the answer to Problem $11.6$ in~\cite{KST16} is mainly a corollary of the proof of Problem $11.4$ in the same survey, which was adressed in~\cite{SC19}.
The problem, being the unsolved part in~\cite{Du12}, was recently solved in~\cite{JL19} and~\cite{BJM19}. Nevertheless, one can observe that the proof by deducing it from earlier work is much shorter.

We start with proving a lemma dealing with the case that the cycle is not minimal.

\begin{lem}\label{lem:maxWuniC2k}
	Among all unicyclic graphs of order $n \ge 2k$ containing an even cycle $C_{2k}$, the Wiener index is maximized by the graph formed by attaching a path of order $n-2k$ to a vertex of a $C_{2k}$.
\end{lem}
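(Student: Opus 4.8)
The plan is to fix a unicyclic graph $G$ of order $n$ whose unique cycle is $C_{2k}$, and to argue in two stages: first that the optimal $G$ has all its trees-off-the-cycle concentrated at a single vertex of $C_{2k}$, and second that this single attached tree must be a path. For the first stage, I would use an averaging/SPR argument analogous to the one in Proposition~\ref{starpathstarlike}: suppose two distinct cycle vertices $u_i$ and $u_j$ each carry a nontrivial subtree. Pick one such subtree $S$ rooted at, say, $u_i$, and consider moving it to a better cycle vertex. The cleanest version is to note that for a fixed vertex $v \in V(G) \setminus (S \cup C_{2k})$ and any $w \in C_{2k}$, the sum of distances $\sum_{x \in C_{2k}} d_G(v,x)$ is the same regardless of which cycle vertex we root $S$ at; what changes is the distance from the vertices of $S$ to everything outside. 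By relocating $S$ to whichever cycle vertex is ``farthest'' from the rest of $G$ (in a suitable weighted sense) and iterating, one pushes all attached trees to one vertex. A convexity-style inequality as in Equation~(\ref{afschatting}), comparing the two relocations to the original at an interior position, gives the strict increase, with the key point being that distances across the cycle behave linearly in the usual metric until one passes the antipode.

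For the second stage — once all the extra $n-2k$ vertices hang off a single cycle vertex $w$ as a tree $T'$ — I would show $T'$ must be a path attached at $w$. This is the standard fact that among all trees on a fixed vertex set with a fixed root, and with distances to an external ``anchor'' set counted with positive weights, the Wiener-type sum is maximized by the path rooted at its endpoint; it follows from a branch-straightening SPR move (prune a branch and regraft it at the current leaf furthest from $w$), which strictly increases all cross-distances between the two branches while preserving everything else. Since $f \equiv \mathrm{Id}$ here this is exactly the Wiener-index monotonicity already exploited repeatedly above, so I can invoke the same mechanism.

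The main obstacle I anticipate is the first stage: unlike in the tree case, the cycle $C_{2k}$ makes the distance function $d_G(v, u_i)$ a non-monotone, ``folded'' function of $i$ (it increases then decreases as $u_i$ runs around the cycle), so the simple two-endpoint averaging of Proposition~\ref{starpathstarlike} does not apply verbatim. I would handle this by a direct exchange argument: if subtrees sit at two cycle vertices $u_i \ne u_j$, move the smaller one entirely onto the vertex of $C_{2k}$ maximizing the total weighted distance to the current graph, and check that the net change in $\W$ is a sum of terms each of the form $(\text{size of moved tree})\cdot(\text{distance gained})$, which is strictly positive because at least one pair of vertices genuinely moves farther apart. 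Once concentration at one vertex is established, I would also need the easy observation that no ``chord-like'' shortcuts can help — but there are none to worry about, since $G$ is unicyclic and we are only relocating pendant structure, not adding edges. Finally I would identify the resulting extremal graph as precisely $C_{2k}$ with a pendant path of order $n-2k$, matching the statement.
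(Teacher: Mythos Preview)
Your structural/SPR plan is a genuinely different route from the paper's. The paper does a short induction on $n$: delete a leaf $v$, bound its transmission $\sum_{u\ne v} d(u,v)$ directly (using only that the distance from $v$ to the cycle is at most $n-2k$, that the diameter is at most $n-k$, and that $k-1$ consecutive distances to cycle vertices are repeated), and apply the hypothesis to $G\setminus v$. No relocation of subtrees is needed.

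Your approach is fixable but stage~1 as written has a real gap. You propose, when subtrees hang at two distinct cycle vertices, to move the smaller one to the cycle vertex that \emph{maximizes} its total distance to the rest of the graph. Since the sum of distances from any cycle vertex to all of $C_{2k}$ is the same, that optimal relocation point is antipodal to the other pendant material, so iterating this move spreads the subtrees apart rather than concentrating them; you never reach the single-attachment configuration that stage~2 assumes. Also, ``strictly positive because at least one pair moves farther apart'' is not a valid justification: many pairs get closer under any cycle relocation, and you need the net balance. The exchange that actually works is different: first straighten each attached tree into a path ending at its cycle vertex (your stage~2 applied locally), and then, if paths of lengths $a$ and $b$ hang at cycle vertices at cycle-distance $r\le k$, regraft the $b$-path onto the \emph{tip} of the $a$-path. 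A direct count gives a Wiener-index change of $2kab-ab(r+1)=ab(2k-r-1)\ge ab(k-1)>0$ for $k\ge 2$, which forces concentration. With this correction your plan goes through, but the paper's leaf-deletion induction is considerably shorter and sidesteps the cycle ``folding'' issue you correctly flagged.
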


\begin{proof}
	We can prove this by induction, the $n=2k$ case being the trivial base case as $C_{2k}$ is the only unicyclic graph on $2k$ vertices containing a $C_{2k}.$
	Assume the lemma is true for $n-1 \ge 2k$.
	Any unicyclic graph $G$ of order $n>2k$ containing a $C_{2k}$ has at least one leaf $v$. Let $H= G \backslash v$.
	Note that the distance from $v$ to the $C_{2k}$ is at most $n-2k$ and the diameter of $G$ is at most $n-k$.
	At least $k-1$ consecutive distances between $v$ and vertices of $C_{2k}$ appear twice, these are at most $n-2k+1$ up to $n-k-1$.
	Thus $\sum_{u \in H} d(v,u) \le \sum_{i=1}^{n-k} i + \sum_{i=n-2k+1}^{n-k-1} i$ and together with the induction hypothesis on $H$, we get the result.	
\end{proof}

Using the notation as has been done in~\cite{SC19} (Figure $9$), the theorem is stated below.

\begin{thr}\label{thr:bip}
	The maximum Wiener index among all $n$-vertex unicyclic graphs
	with bipartition sizes $p,q$ ($1<p \le q$) is attained by exactly one graph, $G^4_{  \lceil{ \frac{q-p}2 \rceil} ,\lfloor{ \frac{q-p}2 \rfloor}, 2p-4  }.$
\end{thr}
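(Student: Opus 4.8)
The plan is to reduce Theorem~\ref{thr:bip} to the already-proven solution of Problem $11.4$ from~\cite{KST16} (the tree case, handled in~\cite{SC19}) together with Lemma~\ref{lem:maxWuniC2k}. First I would record the structural dichotomy: an $n$-vertex unicyclic graph with bipartition $(p,q)$ has a unique cycle $C_{2k}$, and since the graph is bipartite the cycle has even length, so $k\ge 2$ and $2k\le 2\min\{p,q\}=2p$. Thus either the cycle is a $C_4$ ($k=2$), or $k\ge 3$. For the $k\ge 3$ case Lemma~\ref{lem:maxWuniC2k} says that, for a \emph{fixed} cycle length $2k$, the Wiener-maximal unicyclic graph is $C_{2k}$ with a single pendant path of order $n-2k$; I would then compute $\W$ of that explicit graph as a function of $k$ (a one-variable polynomial expression) and check it is strictly decreasing in $k$ on $[2,p]$, so that among all unicyclic graphs containing \emph{any} cycle of length $\ge 6$ the best is beaten by something with a $C_4$. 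This isolates the real work to graphs whose cycle is a $C_4$.

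Next I would handle the $C_4$ case, which is where the connection to trees enters. A unicyclic graph $G$ with a $C_4$ can be obtained from a tree $T$ on $n$ vertices by adding one edge joining two vertices at distance $3$ in $T$ (the two ``diagonal'' vertices of the eventual $C_4$ are the ones at distance $2$). Equivalently, $G$ is $T$ plus a chord closing a path $u_0u_1u_2u_3$ into a $4$-cycle. The point is that adding such a chord changes distances in a controlled way, and one wants to say: $\W(G)$ is maximized when $T$ is the extremal tree for Problem $11.4$ and the chord is placed at the ``end'' of that tree, yielding exactly $G^4_{\lceil (q-p)/2\rceil,\lfloor (q-p)/2\rfloor,2p-4}$. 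Concretely, I would argue that for the optimal $C_4$-unicyclic graph, contracting/deleting one of the two chord-paths back to a tree and invoking the solved tree problem shows the underlying ``skeleton'' must be the extremal caterpillar-type tree of~\cite{SC19}, and then a local exchange (an SPR-type move, as in Proposition~\ref{monotonicityTree}) shows the $C_4$ must sit at the extreme end with the remaining $2p-4$ vertices forming the internal path and the two brooms of sizes $\lceil(q-p)/2\rceil,\lfloor(q-p)/2\rfloor$ balancing the surplus $q-p$; bipartition-parity constraints force exactly these sizes.

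The key identities I would lean on: for the $C_4$-plus-pendant-structures, $\W(G)=\W(T)-(\text{correction})$ where $T$ is the spanning tree obtained by deleting the chord, and the correction term depends only on how the chord's endpoints split the rest of the graph — small when the $C_4$ is peripheral, which is why pushing the $C_4$ to the end and balancing the two pendant brooms is optimal, mirroring the tree case verbatim. So the proof is: (i) even-cycle parity $\Rightarrow$ $C_{2k}$ with $2\le k\le p$; (ii) Lemma~\ref{lem:maxWuniC2k} plus a monotonicity-in-$k$ computation $\Rightarrow$ reduce to $k=2$; (iii) relate $C_4$-unicyclic graphs to trees, invoke the solved Problem $11.4$ from~\cite{SC19} for the skeleton, and finish with an SPR/local-exchange argument and a bipartition-size bookkeeping to pin down $G^4_{\lceil(q-p)/2\rceil,\lfloor(q-p)/2\rfloor,2p-4}$ as the unique maximizer.

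The main obstacle I expect is step (iii): showing rigorously that the extremal $C_4$-unicyclic graph's ``tree part'' is forced to be the extremal tree of the bipartite tree problem. Deleting the chord from $G$ gives a tree, but it is a tree on $n$ vertices with a \emph{different} (or at least constrained) bipartition, and one must be careful that the Wiener-index comparison survives reattaching the chord — i.e. that the chord-correction term is itself minimized simultaneously with $\W(T)$ being maximized. I would address this by choosing the spanning tree cleverly (delete a cycle edge incident to a high-degree cycle vertex) so that the correction term is monotone in exactly the parameters that the tree optimization already controls, so the two optimizations do not conflict; verifying this compatibility, and the uniqueness claim, is the delicate part of the argument.
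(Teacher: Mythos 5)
Your steps (i) and (ii) are sound and overlap with part of the actual argument: the cycle must be a $C_{2k}$ with $2\le k\le p$, and Lemma~\ref{lem:maxWuniC2k} together with an explicit comparison does eliminate $k\ge 3$ (the paper checks, e.g., $\W(G^4_{1,1,2k-6})=\tfrac43 k^3-\tfrac{19}{3}k+11>k^3=\W(C_{2k})$ and notes the pendant path only widens the gap). The genuine gap is step (iii), and you have correctly diagnosed it yourself: writing $\W(G)=\W(T)-(\text{correction})$ for a chord-deleted spanning tree $T$ does \emph{not} reduce the problem to the tree case, because maximizing $\W(T)$ and minimizing the correction term are competing objectives, and you give no argument that they are optimized by the same configuration. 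Your proposed fix (``choose the spanning tree cleverly so the correction is monotone in the parameters the tree optimization controls'') is a hope, not a proof; as written, the central case of the theorem is unproved. There is also a misidentification: Problem $11.4$ of \cite{KST16} is not the tree problem but the problem of maximizing the Wiener index over \emph{unicyclic} graphs of given order and matching number, solved in \cite{SC19} with extremal graph exactly of the form $G^4_{a,b,c}$.

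That misidentification hides the short route the paper actually takes, which avoids your step (iii) entirely: a bipartition with sizes $p\le q$ forces matching number $m\le p$, so for $q\ge p+3$ one simply quotes Theorem $7.1$ and Proposition $2.1$ of \cite{SC19} with $n=p+q$ and $m=p$ --- the matching-number extremal graph already is $G^4_{\lceil(q-p)/2\rceil,\lfloor(q-p)/2\rfloor,2p-4}$ and has the required bipartition, so no chord-correction analysis is needed. Only the boundary cases $q\in\{p,p+1,p+2\}$ require the $C_{2k}$ comparison of Lemma~\ref{lem:maxWuniC2k} and a check that $\W(G^4_{n/2-m,n/2-m,2m-4})<\W(G^4_{1,1,2p-4})$ for $m<p$. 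If you want to salvage your outline, replace step (iii) by this matching-number reduction; otherwise you must actually prove the compatibility claim you flagged as delicate, which is a substantial piece of work you have not supplied.
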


\begin{proof}
	Note that a graph $G$ having bipartition of sizes $q \ge p$ has a matching number $m$ which is at most $p$.
	If $q \ge p+3$, the result is a consequence of Theorem 7.1 and Proposition 2.1 from~\cite{SC19} applied on $n=p+q$ and $m=p$.
	If $q \in \{p,p+1\}$, we have to take the maximum over all possible graphs containing an even cycle. 
	
	The maximum for the graphs in Lemma~\ref{lem:maxWuniC2k} is attained when $k=2$. 
	There are multiple ways to see this. 
	Let $G$ be $C_{2k}$ with a path $P_{n-2k}$ attached to it.
	Let $v$ be a neighbour of the vertex with degree $3$, or a random vertex of $G=C_{2k}$ if $n=2k.$
	Then $$\sum_{u,w \in G\backslash v} d(u,w) \le W(P_{n-1}) \mbox{ and } 
	\sum_{u \in G\backslash v} d(u,v) \le 2\cdot 1 + 2 \cdot 2+ 3+ \ldots + (n-3).$$
	Equality holds if and only if $k=2.$
	
	If $q=p+2$, we know the extremal graph containing a $C_4$ with $n=2p+2$ and $m\le p$ is $G^4_{n/2-m,n/2-m,2m-4}$ by Section $7$ in~\cite{SC19}.
	Furthermore $W(G^4_{n/2-m,n/2-m,2m-4}) < W(G^4_{1,1,2p-4})$ if $m<p$.
	
	The graph $G^4_{1,1,2p-4}$ has a larger Wiener index than the extremal graphs in Lemma~\ref{lem:maxWuniC2k} for $k\ge 3$, from which we conclude again.
	For this it is enough to note that for all $k\ge3$, we have 
	$$W(G^4_{1,1,2k-6})=\frac 43 k^3 - \frac{19}3 k +11 > k^3 = W(C_{2k}),$$
	as adding the path of order $n-2k$ to both structures only makes the difference larger.	 
\end{proof}

\section{Maximum difference of Wiener Index and Eccentricity}\label{sec:5}

In this section, we prove Conjecture $4.3$ in \cite{DAKD21}.

\begin{thr}\label{thr:W-eps}
	For $n \ge 9$, among all graphs with order $n$, $W(G)-\varepsilon(G)$ is maximized by $P_n.$ Moreover, $P_n$ is the unique extremal graph.
\end{thr}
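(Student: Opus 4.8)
The plan is to show that among all connected graphs on $n$ vertices, both $\W(G)$ and $-\varepsilon(G)$ are essentially ``maximized'' by the path, but since $\varepsilon(P_n)$ is itself the largest possible eccentricity sum, a direct ``maximize each term'' argument fails; instead I would argue that any deviation from $P_n$ loses more in the Wiener index than it can possibly gain in the eccentricity term. First I recall the classical fact that $\W(G) \le \W(P_n) = \binom{n+1}{3}$ with equality only for $P_n$, and that $\varepsilon(G) \le \varepsilon(P_n)$ as well, with $\varepsilon(P_n)$ of order $\tfrac34 n^2$. So the subtlety is entirely the simultaneous behaviour. I would set $D(G) = \W(G) - \varepsilon(G)$ and aim to prove $D(G) < D(P_n)$ for all $G \not\cong P_n$ once $n \ge 9$, the threshold coming from small cases where short paths, stars, and cycles must be checked by hand.

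The key step is a structural reduction: since adding an edge to a non-path graph $G$ strictly decreases $\W$ by at least the amount it can decrease $\varepsilon$ only in controlled ways, it suffices to treat $G$ a tree. For $G$ a tree that is not $P_n$, pick a longest path $u_0 u_1 \cdots u_d$ (so $d = d(G) \le n-2$) and a branch vertex $u_i$ of degree $\ge 3$ with a pendant subtree $S$ hanging off it. The idea is to perform an SPR-type move: detach $S$ and reattach it at the nearer endpoint of the diametral path (or, more effectively, straighten $S$ into the path to push toward $P_n$). One tracks the change $\Delta \W$ and $\Delta \varepsilon$. Each such move strictly increases $\W$ (many distances grow, as in the proof of Proposition~\ref{monotonicityTree}), and I would show the increase in $\W$ is at least $1$ while the increase in $\varepsilon$ is bounded: moving a single leaf outward can raise $\varepsilon(G)$ by at most $O(d)$ but the corresponding gain in $\W$ from lengthening the path is of order $n \cdot d$, so for $n$ large the net change $\Delta D = \Delta \W - \Delta \varepsilon > 0$. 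Iterating these moves, any tree transforms into $P_n$ with $D$ strictly increasing at each step, giving both the bound and uniqueness.

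The main obstacle, and where care is needed, is the bookkeeping of $\Delta \varepsilon$ under an SPR move: eccentricities are not ``local'' the way individual distances are, so one must argue that relocating a subtree $S$ toward the end of the diametral path can only increase the eccentricities of the vertices of $S$ (by at most $d(u_i,u_0)$ or so) and does not decrease anyone else's eccentricity enough to matter; in fact one should check $\varepsilon$ is monotone non-decreasing under the move, reducing the problem to $\Delta \W > 0$, which is already known. A clean way to organize this: choose the move so that the diameter is non-decreasing and no vertex gets closer to all others, so that $\varepsilon(v)$ is non-decreasing for every $v$; then $\Delta \varepsilon \ge 0$ while $\Delta \W > 0$, hence $\Delta D > 0$, and no $n \ge 9$ hypothesis is even needed for the induction step — the hypothesis $n \ge 9$ is only to dispose of the finitely many base configurations (small trees, $C_n$, $K_{1,n-1}$, the ``broom'' $\A_{n,2}$, etc.) where a non-path graph could conceivably tie or beat $P_n$ because the path's eccentricity penalty is not yet dominated. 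I would verify those residual small cases by the explicit formulas $\W(P_n)=\binom{n+1}{3}$, $\varepsilon(P_n) = \lfloor 3n^2/4 \rfloor$ (suitably written), and the analogous closed forms for the competitor graphs, exhibiting $n=8$ as the last exception if that is indeed where the bound fails.
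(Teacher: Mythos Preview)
Your proposal contains a genuine logical error that makes the argument collapse. In the ``clean way'' paragraph you claim: choose the SPR move so that every eccentricity is non-decreasing, so $\Delta\varepsilon \ge 0$, while $\Delta W > 0$, \emph{hence} $\Delta D > 0$. But $D = W - \varepsilon$, so $\Delta D = \Delta W - \Delta\varepsilon$; having both terms non-negative tells you nothing about their difference. What you actually need is $\Delta W > \Delta\varepsilon$, and establishing this inequality is the whole content of the problem. Your earlier magnitude heuristic (``$\Delta W$ is of order $n\cdot d$, $\Delta\varepsilon$ only $O(d)$'') is also wrong: moving a single leaf from a branch vertex to an endpoint can raise the eccentricity of \emph{every} vertex on the diametral path by $1$, so $\Delta\varepsilon$ is $\Theta(n)$, the same order as $\Delta W$. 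For instance, attaching one leaf at $u_1$ of $P_{n-1}$ versus $P_n$ gives $\Delta W = n-3$ but $\Delta\varepsilon \approx n/2$; the margin $\Delta D \approx (2n-15)/4$ is genuinely thin, which is exactly why the statement fails for $n \le 8$.

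The reduction from general graphs to trees is similarly hand-waved. Deleting an edge increases $W$ but can also increase $\varepsilon$, so ``it suffices to treat $G$ a tree'' is not automatic; the paper spends its Lemma~\ref{lem:trees_extremal} on this, and the argument is delicate (it needs the radius to be at least $3$, and the cycle-length cases $k=3,5$ have to be handled separately because there the naive inequality $(W-\varepsilon)(C_k) \le (W-\varepsilon)(P_k)$ actually fails by $1$). The paper then does \emph{not} use an SPR improvement scheme for trees at all: it checks $9 \le n \le 20$ by computer and for $n \ge 21$ removes a non-diametral leaf $v$ and bounds $\sum_u d(u,v) - \ecc(v) < \binom{n-1}{2} = (W-\varepsilon)(P_n) - (W-\varepsilon)(P_{n-1})$, finishing by induction. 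Your SPR idea could conceivably be made to work, but it would require honest two-sided estimates on $\Delta W$ versus $\Delta\varepsilon$, not the sign argument you gave.
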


To start with, we prove that we can focus on trees as deleting an edge does not decrease the quantity $(W-\varepsilon)$, where $(W-\varepsilon)(G)$ denotes $W(G)-\varepsilon(G)$.

\begin{lem}\label{lem:trees_extremal}
	Let $G$ be a graph with order $n \ge 9$ and radius at least $3$. Let $e$ be an edge such that $G\backslash e$ is connected.
	Then $(W-\varepsilon)(G)\le (W-\varepsilon)(G \backslash e).$
\end{lem}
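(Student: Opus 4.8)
The plan is to track separately how the Wiener index and the eccentricity term change when an edge $e=xy$ is removed from $G$, keeping $G\backslash e$ connected. On the Wiener side this is easy: deleting $e$ can only increase distances, and at least one pair of distances strictly increases (namely $d(x,y)$ goes from $1$ to at least $2$), so $\W(G\backslash e)\ge \W(G)+1$; in fact I will want the stronger bound that deleting $e$ increases $\W$ by at least the number of pairs whose distance strictly grows, which will be needed to absorb the eccentricity change. On the eccentricity side the quantity can go up: removing an edge may increase $\varepsilon(v)$ for many vertices $v$. So the real content is to show that the total increase in $\varepsilon$ is strictly smaller than the total increase in $\W$.

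First I would fix notation: write $\varepsilon_G(v)$ and $\varepsilon_{G'}(v)$ for the eccentricities in $G$ and $G'=G\backslash e$, and note $\varepsilon_{G'}(v)\ge \varepsilon_G(v)$ for every $v$, with the diameter satisfying $d(G')\le n-1$ since $G'$ is connected on $n$ vertices. Hence $\varepsilon(G')-\varepsilon(G)\le \sum_v(\varepsilon_{G'}(v)-\varepsilon_G(v))\le n\bigl((n-1)-r(G)\bigr)$ where I also use the radius hypothesis $r(G)\ge 3$; a cruder but safer bound is $\varepsilon(G')-\varepsilon(G)\le n(n-1)-\varepsilon(G)\le n(n-1)$, but I expect to need something sharper. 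The key observation is that if $\varepsilon_{G'}(v)>\varepsilon_G(v)$, then some shortest path from $v$ realizing its eccentricity in $G$ must have used the edge $e$; this lets me relate the set of vertices whose eccentricity changes, and by how much, to the structure of shortest paths through $e$. In particular, for a vertex $v$ whose eccentricity jumps, there is a witness vertex $w$ with $d_G(v,w)=\varepsilon_G(v)$ and every $v$–$w$ geodesic passing through $e$; the increase $\varepsilon_{G'}(v)-\varepsilon_G(v)$ is then bounded by the detour length $d_{G'}(x,y)-1$ around $e$.

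Next I would compare this to the gain in $\W$. Let $A$ be the set of vertices closer to $x$ than to $y$ in $G'$ using only the fact that $e$ is a ``bridge-like'' connection after deletion is false in general — so instead I argue directly: every vertex $v$ whose eccentricity increased contributes at least (increase amount) new distance to some specific pair $\{v,w\}$ in the Wiener sum, and these pairs, together with the pair $\{x,y\}$, are enough to force $\W(G')-\W(G)\ge \varepsilon(G')-\varepsilon(G)+1$. The honest way to make this rigorous is a case split on the radius / diameter regime: if $d(G')=n-1$ then $G'$ is a path plus (in $G$) the chord $e$, i.e. $G$ itself has at most $n-1+1$ edges and the whole thing can be checked by a direct computation using $n\ge 9$; if $d(G')\le n-2$ the slack in the eccentricity bound $\varepsilon(G')\le n\,d(G')$ versus the guaranteed Wiener gain (which grows like the size of the ``far side'' of $e$, of order at least linear in $n$) closes the inequality, again using $n\ge 9$ to beat the additive constants.

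The main obstacle will be step three: cleanly bounding $\varepsilon(G')-\varepsilon(G)$ from above by a quantity that is provably dominated by $\W(G')-\W(G)$ without getting lost in cases. I expect the cleanest route is to bound $\varepsilon(G')-\varepsilon(G)\le \sum_{v}\bigl(d_{G'}(v,\sigma(v))-d_G(v,\sigma(v))\bigr)$ for a suitable choice of witnesses $\sigma(v)$, recognize the right-hand side as a sub-sum of $\W(G')-\W(G)=\sum_{\{u,v\}}\bigl(d_{G'}(u,v)-d_G(u,v)\bigr)$ taken over ordered pairs, and then argue that because each unordered pair is counted at most twice on the left but the pair $\{x,y\}$ contributes its gain only to $\W$, plus the radius-$\ge 3$ and $n\ge 9$ hypotheses rule out the small sporadic graphs where equality could occur, we get the strict inequality. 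I would double-check the boundary case $n=9$ and the radius-exactly-$3$ case by hand, since those are where the constants are tightest.
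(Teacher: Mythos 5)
Your high-level plan (charge each vertex's eccentricity increase to a distance increase in the Wiener sum) is the right one, but the proposal stops exactly where the real work is, and the fallback you offer does not close the gap. The step you flag as ``the main obstacle'' is resolved in the paper by an \emph{injective} charging scheme that your sketch does not achieve: for a vertex $z$ whose eccentricity grows, with $e=uv$ and (WLOG) $d_G(z,u)<d_G(z,v)$, one takes a witness $t$ with $\ecc_{G\backslash e}(z)=d_{G\backslash e}(z,t)$ and observes that the $z$--$t$ geodesic in $G$ must pass through $e$, so $d_G(z,t)=d_G(z,v)+d_G(v,t)$ and $d_{G\backslash e}(v,t)=d_G(v,t)$; the triangle inequality then gives $\ecc_{G\backslash e}(z)-\ecc_G(z)\le d_{G\backslash e}(z,v)-d_G(z,v)$. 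Since the charged pair $\{z,v\}$ (with $v$ an endpoint of $e$) is distinct for distinct $z$, each such pair is used \emph{once}, and the whole eccentricity increase off the cycle is absorbed by a sub-sum of $\W(G\backslash e)-\W(G)$ with no multiplicity. Your proposal instead concedes that ``each unordered pair is counted at most twice,'' which only yields $\varepsilon(G\backslash e)-\varepsilon(G)\le 2\bigl(\W(G\backslash e)-\W(G)\bigr)$ --- a factor of $2$ short of what is needed. Your other quantitative claim, that the Wiener gain is ``of order at least linear in $n$,'' is also false in general: deleting an edge of a triangle whose endpoints have no other distance-relevant role changes $\W$ by exactly $1$.

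The second missing ingredient is the reduction to the shortest cycle $C_k$ through $e$ and the analysis of the tight cases. After the charging above, what remains is a comparison of $(W-\varepsilon)(C_k)$ with $(W-\varepsilon)(P_k)$, which fails only for $k\in\{3,5\}$, and there only by exactly $1$. It is precisely in these two cases that the hypothesis of radius at least $3$ is used: it guarantees an additional vertex (e.g.\ a neighbour $v_2$ of $v$ on a geodesic realizing $\ecc(v)\ge 3$) whose distance to $u$ strictly increases and which was not yet charged, supplying the missing unit. Your proposal invokes the radius hypothesis only as generic ``slack'' and never at a concrete point of the argument, and your dichotomy on $d(G\backslash e)=n-1$ versus $\le n-2$ does not isolate these tight configurations. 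Without the injective charging and the $k\in\{3,5\}$ case analysis, the argument does not go through.
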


\begin{proof}
	Let $e=uv$ and assume $(W-\varepsilon)(G)> (W-\varepsilon)(G \backslash e).$
	Suppose the shortest cycle in $G$ containing $e$ is $C_k$.
	Note that distances do not decrease when deleting edges, so we have to focus on the eccentricities that increase.
	Let $z$ be a vertex not belonging to $C_k$ for which the eccentricity increases when deleting $e$.
	Without loss of generality we can assume $d(z,u)<d(z,v)$. Let $\ecc_{G\backslash e}(z)=d_{G\backslash e}(z,t)$ for a vertex $t$ (where possible $t=v$).
	Then we know that $d(z,t)<d_{G\backslash e}(z,t)$, so the shortest path from $z$ to $t$ in $G$ uses the edge $uv$ and so $d(z,t)=d(z,v)+d(v,t).$
	This also implies that $d(v,t)=d_{G\backslash e}(v,e).$
	Combining these observations with the definition of eccentricity and the triangle inequality, we get that
	\begin{align*}
	\ecc_{G\backslash e}(z) - \ecc_{G}(z) &\le d_{G\backslash e}(z,t)-d(z,t)\\&\le d_{G\backslash e}(z,v)+ d_{G\backslash e}(v,t) - ( d(z,v) + d(v,t))\\&=  d_{G\backslash e}(z,v)-d(z,v).	
	\end{align*}
	As the difference in eccentricity for $z$ is cancelled by the difference of distance between $z$ and $v$, while $z$ was taken arbitrary,  $(W-\varepsilon)(G)>(W-\varepsilon)(G \backslash e)$
	implies that 
	\begin{equation}\label{eq:C_kvsP_k}
	(W-\varepsilon)(C_k)>(W-\varepsilon)(P_k) \Leftrightarrow \frac{k}{2} \left \lfloor \frac{k^2}{4} \right \rfloor -k \left \lfloor \frac{k}{2} \right \rfloor > \binom{k+1}{3} - \left \lfloor \frac 34 k^2 - \frac k2 \right \rfloor
	\end{equation} 
	which is only the case if $k \in \{3,5\}$ and then the difference is exactly equal to $1$.
	
	\textit{Case: $k=3$} To have a contradiction, both $\ecc(u)$ and $\ecc(v)$ need to increase when deleting $e$. This implies there is a vertex $w$ such that $3 \le \ecc(u)=d(w,u)<d_{G \backslash e}(w)$. Let $v_2$ be the neighbour of $v$ on a shortest path in $G$ from $v$ to $w.$
	Then $d_{G \backslash e}(v_2,u)=d(v_2,u)+1.$ 
	So if $\ecc_{G \backslash e}(v_2)=\ecc(v_2)$, we have an additional difference that (in the expansion of $W(G\backslash e)-W(G)$) is at least $1$, leading to a contradiction.
	If $\ecc_{G \backslash e}(v_2)>\ecc(v_2)\ge 3$, there is an other vertex $x$ not belonging to the $C_3$ such that $d_{G \backslash e}(x,v_2)>d(x,v_2)$ and we conclude again.
	
	\textit{Case: $k=5$} 
	In this case, let $v_2$ be the neighbour from $v$ in the $C_5$ different from $u$.
	When doing the check in the reduced case that $(W-\varepsilon)(C_k)>(W-\varepsilon)(P_k)$, we take into account that the eccentricity of $v_2$ goes up by $1$ in $C_5$.
	So $(W-\varepsilon)(G)>(W-\varepsilon)(G \backslash e)$ is only possible if $\ecc_{G \backslash e}(v_2)>\ecc(v_2).$
	But since $\ecc(v_2) \ge 3$, there is a vertex $x$ not belonging to the $C_5$ for which $d_{G \backslash e}(v_2,x)>d(v_2,x).$
	As we did not take this difference into account before when looking to $(W-\varepsilon)(G)-(W-\varepsilon)(G \backslash e)$, we conclude that $(W-\varepsilon)(G)\le (W-\varepsilon)(G \backslash e)$ again.	
\end{proof}

Having proven this lemma, it is essentially enough to prove it for trees, once checking the result for graphs with radius at least $2.$ For this a bit of work is needed (as one can expect due to the behaviour of the extremal graphs for $n \le 8$).

\begin{proof}[Proof of Theorem~\ref{thr:W-eps}]
	First, one can check that $W(G)-\varepsilon(G)$ is smaller than $(W-\varepsilon)(P_n)$ when $15 \ge n \ge 9$ in case $G$ has radius at most $2$.
	For $n=9$, a brute force check confirms.
	For $n=10$, if the diameter is at most $3$, then $W(G) \le 97$ and $\varepsilon(G) \ge 10$ and we are done. If the diameter is $4$, we have $W(G) \le 117$ and $\varepsilon(G) \ge 2\cdot 4 +2\cdot 3+6\cdot 2=26$, so $W(G)-\varepsilon(G) \le 91 < 95=(W-\varepsilon)(P_{10}).$
	The cases $11 \le n \le 15 $ work similarly.
	For $n \ge 16$, note that the diameter of $G$ is bounded by $4$ and so $W(G) \le 4\binom{n}{2}<(W-\varepsilon)(P_n).$
	
	So from now on, we only have to consider graphs with radius at least $3$ and hence by Lemma~\ref{lem:trees_extremal} we can first focus on trees.
	A bruteforce check by computer over all trees of order $9 \le n \le 20$ verifies the theorem for these values.
	For $n\ge 21$, we first observe that the diameter of an extremal graph is at least $7$, since otherwise 
	$(W-\varepsilon)(G) \le 6 \binom{n}{2}-3n<(W-\varepsilon)(P_n).$
	Now we can prove the statement by induction.
	If the diameter $d$ equals $n-1$, we have the path $P_n$.
	If the diameter is smaller, $d \le n-2$, one can remove a leaf $v$ which is not part of the diameter.
	Now the eccentricities of all vertices different from $v$ are the same in $G$ and $G \backslash v$.
	By the induction hypothesis, we have $(W-\varepsilon)(G \backslash v) \le (W-\varepsilon)(P_{n-1}).$
	Furthermore we have 
	\begin{align*}
	(W-\varepsilon)(G)-(W-\varepsilon)(G \backslash v)
	&=\sum_{u \in G \backslash v} d(u,v) - \ecc(v)\\
	&<1+2+\ldots +(n-2) \\
	&=(W-\varepsilon)(P_n)-(W-\varepsilon)(P_{n-1})
	\end{align*} 
	from which we conclude.	This implies that $P_n$ is the unique tree maximizing $(W-\varepsilon)(G).$
	By Lemma~\ref{lem:trees_extremal} no graph with a spanning tree which is not a path can be extremal (note that the radius does not decrease when removing edges).	
	Since the cycle $C_n$ (and the path $P_n$ itself) is the only graph which has only $P_n$ a spanning graph, but $(W-\varepsilon)(C_n)<(W-\varepsilon)(P_n)$ for $n>5$, as concluded from the computation in Equation~\ref{eq:C_kvsP_k}, the path $P_n$ is the unique extremal graph.
\end{proof}

Additionally, we add two remarks on the work in~\cite{DAKD21} about the minimum for $W-\varepsilon.$
In their Theorem $3.2$, the authors prove that the minimum for $(W-\varepsilon)(T)$ among trees is attained by caterpillars.
More precisely, the extremal tree is $P_n$ when $n\le 6
$ and if $n\ge7$, the extremal tree is attained for the star $S_{n-1}$ with one edge subdivided.
This is mainly a corollary of the following lemma, as it implies that we only have to compare a few possible trees.
\begin{lem}
	Among all trees with fixed diameter $d$ and order $n$, the minimum of $(W-\varepsilon)$ occurs if and only if we have a path $P_{d+1}$ of diameter $d$ with the $n-d-1$ remaining vertices connected to the same vertex on the path, which is a central vertex if $d$ is odd, or a central vertex or neighbour of it, if $d$ is even.
\end{lem}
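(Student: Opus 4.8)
The plan is to fix the diameter $d$, order $n$, and compare trees by understanding how each non-path vertex contributes to $W$ and to $\varepsilon$, then argue that concentrating all surplus vertices at a single well-chosen vertex of the diametral path $P_{d+1}$ simultaneously minimizes $W$ and keeps $\varepsilon$ at its maximum. Write $P = u_0u_1\cdots u_d$ for a diametral path of the tree $T$ and let $X = V(T)\setminus V(P)$, so $|X| = n-d-1$. First I would show that the eccentricity term is \emph{maximized} (hence favorable for the minimum of $W-\varepsilon$) exactly when $T$ is a caterpillar whose spine is $P$ and every vertex of $X$ is a leaf adjacent to $P$: any vertex at distance $\ge 2$ from $P$ either creates eccentricities larger than those forced by $P$, or (more carefully) one checks that pulling such a vertex closer to $P$ does not decrease $\varepsilon$ while it strictly decreases $W$; so for the minimum it suffices to consider such caterpillars.

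Next, restricting to caterpillars with spine $P$, I would compute $W$ as a function of the multiset of attachment points. If a leaf of $X$ is attached at $u_i$, its total distance to the rest of the tree is $\sum_{j=0}^d(1+|i-j|)$ plus its distance to the other vertices of $X$; the latter cross-terms are essentially fixed (they contribute $1+|i-i'|$ for two leaves at $u_i,u_{i'}$, which is minimized by making $i=i'$), and $\sum_j |i-j|$ is minimized precisely when $u_i$ is a central vertex of $P$ (one central vertex if $d$ is even but then $i = d/2$ is strictly better than a neighbor for this sum — see below; a subtlety to resolve). Since $\varepsilon$ is already pinned at its maximum value for \emph{every} such caterpillar (each vertex of $X$ has eccentricity $\max\{i, d-i\}+1 \le d$, and the spine eccentricities are unchanged), minimizing $W-\varepsilon$ over this family reduces to the purely combinatorial problem of minimizing $\sum_{v\in X}\big(\text{dist from attachment point to all of }P\big) + \sum_{\{v,v'\}\subset X}(1+|i_v-i_{v'}|)$.

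For that combinatorial minimization I would use a convexity/exchange argument: the function $i\mapsto \sum_{j=0}^d|i-j|$ is convex with minimum at the center, and the pairwise term $\sum |i_v-i_{v'}|$ is Schur-convex, so it is minimized when all $i_v$ are equal; combining, all surplus vertices should sit at a single vertex $u_i$ with $i$ as central as possible. The only delicate point is the claimed flexibility when $d$ is even: there I expect one must check that attaching at the exact center $u_{d/2}$ versus at a neighbor $u_{d/2\pm 1}$ gives the \emph{same} value of $W - \varepsilon$ — the extra $+1$ in distance-sum incurred by moving off-center is compensated because moving off-center lets each $X$-vertex achieve eccentricity exactly $d$ (its eccentricity at the exact center is only $d/2+1 < d$ for $d\ge 4$), so $\varepsilon$ increases by exactly $|X|$ times the right amount. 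Reconciling these two bookkeeping terms — the gain in $\varepsilon$ against the loss in $W$ — is the main obstacle, and it is what pins down precisely which attachment vertices are allowed in the statement (center for odd $d$; center or its neighbor for even $d$). Once that is checked, the ``if and only if'' follows since any other configuration is strictly worse in at least one of the two exchange steps.
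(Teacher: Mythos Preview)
Your plan contains a real gap at the step where you claim that ``$\varepsilon$ is already pinned at its maximum value for every such caterpillar''. This is false: for a leaf attached at $u_i$ the eccentricity is $1+\max(i,d-i)$, which genuinely depends on $i$. As a result, your reduction to ``minimize $W$ alone over caterpillars by Schur-convexity'' would force the unique centre even when $d$ is even, contradicting the statement you are trying to prove. You notice the problem later (the paragraph about the centre versus its neighbour), but the specific claim there is also off: moving from $u_{d/2}$ to $u_{d/2\pm1}$ raises the leaf's eccentricity from $d/2+1$ to $d/2+2$, not to $d$; and it raises $\sum_j|i-j|$ by exactly $1$, so the two shifts cancel. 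That cancellation is the whole point, and it cannot be seen if you treat $W$ and $\varepsilon$ separately.

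The paper avoids this by never separating the two terms. For each $u\in U$ at distance $k$ from its nearest path vertex $v_j$, one has $\varepsilon(u)=k+\max(j,d-j)$ (this uses only that $v_0\cdots v_d$ is diametral), so
\[
\sum_{i=0}^{d} d(u,v_i)-\varepsilon(u)=kd+\Big(\sum_{i}|i-j|-\max(j,d-j)\Big).
\]
The first summand forces $k=1$ (this is the caterpillar reduction, done in one line), and the bracket, viewed as a function of $j$, is minimised exactly at the centre when $d$ is odd and at the centre or a neighbour when $d$ is even. Together with the trivial lower bound on the pairwise distances inside $U$ (minimised when all of $U$ hangs off a single $v_j$), this gives the result. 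So the fix to your argument is to bundle the distance-sum and the eccentricity of each extra vertex into a single per-vertex quantity from the start; once you do that, the convexity/exchange machinery you propose becomes unnecessary.
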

\begin{proof}
	Let $U$ be the set of vertices different from the ones on the diameter $v_0v_1\ldots v_d$. Then $\sum_{u,v \in U} d(u,v) \le 2\binom{n-d-1}{2}$ with equality if and only if they are all connected to the same vertex on the diameter.
	We note that for every $u \in U$, $\sum_{0 \le i \le d} d(u,v_i) - \varepsilon(v)$ is minimal if $u$ is connected to a central vertex and equality is also possible if it is connected to a neighbouring vertex of a central vertex when $d+1$ is odd as then there is only one central vertex.
	Here we use that $d(v_j,v_i)+d(v_j,v_{d-i})\ge d-2i$ where $u_j$ is the neighbour of $u$ and $0 \le i < \frac d2$. 
\end{proof}

We now also determine this minimum among all graphs.

\begin{prop}
	For every graph $G$ of order $n$, we have $(W-\varepsilon)(G)\ge \left \lceil \frac{n(n-4)}2\right \rceil.$
\end{prop}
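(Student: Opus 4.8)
The plan is to prove the bound by one purely local estimate applied at every vertex, with no case analysis on the diameter. Write $\varepsilon(v)$ for the eccentricity of a vertex $v$, so that $\varepsilon(G)=\sum_{v}\varepsilon(v)$ and $2W(G)=\sum_{v}\sum_{u\ne v}d(u,v)$. First I would fix $v$, choose a vertex $w$ with $d(v,w)=\varepsilon(v)$ together with a shortest $v$–$w$ path; this path already supplies one vertex at each distance $1,2,\dots,\varepsilon(v)$ from $v$, while each of the remaining $n-1-\varepsilon(v)$ vertices is at distance at least $1$ from $v$. Hence
\[
\sum_{u\ne v}d(u,v)\ \ge\ \binom{\varepsilon(v)+1}{2}+\bigl(n-1-\varepsilon(v)\bigr).
\]

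Next I would sum this over all $v$ and simplify; the right-hand side collapses to $\tfrac12\sum_{v}\varepsilon(v)^2-\tfrac12\varepsilon(G)+n(n-1)$, so that
\[
2\bigl(W(G)-\varepsilon(G)\bigr)\ \ge\ \tfrac12\sum_{v}\bigl(\varepsilon(v)^2-5\varepsilon(v)\bigr)+n(n-1).
\]
The final ingredient is the elementary observation that $t^2-5t=t(t-5)\ge -6$ for every nonnegative integer $t$, the minimum $-6$ being attained exactly at $t\in\{2,3\}$. Applying it termwise gives $\tfrac12\sum_{v}(\varepsilon(v)^2-5\varepsilon(v))\ge -3n$, whence $2(W(G)-\varepsilon(G))\ge n(n-1)-3n=n(n-4)$. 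Since $W(G)-\varepsilon(G)\in\mathbb{Z}$, this rounds up to $(W-\varepsilon)(G)\ge\bigl\lceil n(n-4)/2\bigr\rceil$, as claimed.

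I do not expect a genuine obstacle here: the only things to verify are the two displayed inequalities and the discrete minimum of $t(t-5)$. It is worth noting that the estimate is sharp, and this is what makes the proof satisfying: equality throughout forces every eccentricity to lie in $\{2,3\}$ and equality in the local bound at every vertex, which is exactly what $K_n$ with a (near\nobreakdash-)perfect matching removed achieves — there every eccentricity equals $2$, and from each vertex exactly one vertex lies at distance $2$ and all others at distance $1$. That graph attains $\bigl\lceil n(n-4)/2\bigr\rceil$, so the proposition is best possible, and the same analysis would pin down the extremal graphs should one wish to state them.
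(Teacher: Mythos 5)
Your proof is correct and follows essentially the same route as the paper's: a per-vertex lower bound on $\sum_{u\ne v}d(u,v)-2\varepsilon(v)$, summed over all vertices and finished by the integrality of $W-\varepsilon$. The paper gets the local constant $n-4$ more directly (all $n-1$ distances are at least $1$, one equals $\varepsilon(v)$ and another is at least $\varepsilon(v)-1$), whereas you reach the same constant via the geodesic estimate and minimizing $t(t-5)$ over the integers, so the two arguments coincide after your quadratic minimization.
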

\begin{proof}
	For every vertex $v$, one has 
	$\sum_{u \in V \backslash v} d(u,v) -2\varepsilon(v) \ge n-4$,
	since all distances are at least equal to one, there is a vertex $u$ with $d(u,v)=\varepsilon(v)$ and an other one with $d(w,v)\ge \varepsilon(v)-1$.
	Summing over all vertices $v$, we conclude that $2(W-\varepsilon)(G) \ge n(n-4).$
	Dividing by $2$ and observing that $(W-\varepsilon)(G)$ is always an integer, we conclude.
\end{proof}
The minimum of $(W-\varepsilon)(G)$ is attained by the complement of 
\[\begin{cases}
\frac{n}{2} K_2 &\text{if } n \text{ is even,}\\
\frac{n-1}{2} K_2\cup K_1 \text{ or } \frac{n-3}{2} K_2\cup P_3 & \text{if } n \text{ is odd.}
\end{cases}
\]
For $n \ge 6$, this is the exact characterization of the extremal graphs.
The graph $P_4$ for $n=4$ and the one sketched in Figure~\ref{fig:extrG_n5} for $n=5$ are also extremal.

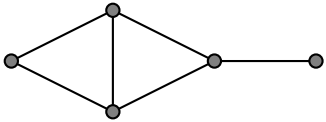
\begin{figure}[h]
	\centering
	\begin{tikzpicture}[thick,scale=0.9]%

	\draw 	(-1.5,0)--  (0,-0.75)  ;
	\draw 	(-1.5,0)--  (0,0.75)  ;
	\draw 	(1.5,0)--  (0,-0.75)  ;
	\draw 	(1.5,0)--  (0,0.75)  ;	
	\draw (0,-0.75)--(0,0.75);
	\draw (1.5,0)--(3,0);

	\draw	(-1.5,0)  node {};	
	\draw	(0,0.75)  node {};
	\draw	(1.5,0)  node {};	
	\draw	(0,-0.75)  node {};
	\draw	(3,0)  node {};
	\end{tikzpicture}
	\caption{Additional extremal graph for $n=5$}
	\label{fig:extrG_n5}
\end{figure}

\section{Maximum weighted Szeged index}\label{sec:6}
In this section, we prove the following open conjecture, posed in~\cite{BFJS}.

\begin{conj}[Conjecture 1 in~\cite{BFJS}]\label{conj:BFJS}
	For any $n$-vertex graph $G$, the weighted Szeged index of $G$, $\wSz(G)$, is upper-bounded by $\wSz( K_{\lfloor \frac n2 \rfloor, \lceil \frac n2 \rceil})$ and equality is only attained by
	the balanced complete bipartite graph $K_{\lfloor \frac n2 \rfloor, \lceil \frac n2 \rceil}$, or $K_3$ if $n=3.$
\end{conj}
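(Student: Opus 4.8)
The plan is to bound $\wSz(G)$ by a degree‑theoretic quantity, show that quantity is maximized by $K_{\lfloor n/2\rfloor,\lceil n/2\rceil}$, and then force the extremal graph to be complete multipartite so the equality case can be settled by an explicit optimization over multipartitions.

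First I would record the elementary pointwise estimate: for every edge $e=uv$, the vertex $v$ together with the $\deg(v)-1$ neighbours of $v$ distinct from $u$ are never strictly closer to $u$ than to $v$, so $n_u(e)\le n-\deg(v)$, and symmetrically $n_v(e)\le n-\deg(u)$. Hence
\[
\wSz(G)\ \le\ \sum_{uv\in E(G)}(\deg u+\deg v)(n-\deg u)(n-\deg v)\ =:\ F(G).
\]
The heart of the argument is then the purely degree‑theoretic inequality $F(G)\le n\lfloor n/2\rfloor^2\lceil n/2\rceil^2$ for every $n$‑vertex graph, and this is the step I expect to be the real obstacle. The naive per‑edge bound $(\deg u+\deg v)(n-\deg u)(n-\deg v)\le n^3/4$ loses a factor of $2$, so one must use globally that an edge contributes close to $n^3/4$ only when both endpoints have degree close to $n/2$, while a graph all of whose degrees are close to $n/2$ has only about $\binom n2/2$ edges. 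Writing $F(G)=\sum_u\deg(u)(n-\deg u)\sum_{v\sim u}(n-\deg v)$, I would prove this either by an optimization over degree sequences — keeping in mind that $(a+b)(n-a)(n-b)$ is concave in $a$ for fixed $a+b$ whereas its mixed partial derivative $2(a+b-n)$ changes sign, so edges with degree sum $\le n$ and those with degree sum $>n$ must be treated separately — or by a smoothing argument reducing to regular graphs, for which $F(G)=n\,\bigl(r(n-r)\bigr)^2\le n\lfloor n/2\rfloor^2\lceil n/2\rceil^2$.

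It then remains to analyze when $\wSz(G)=n\lfloor n/2\rfloor^2\lceil n/2\rceil^2$. In that case the pointwise estimate is tight for every edge, so $n_u(e)=n-\deg(v)$ and $n_v(e)=n-\deg(u)$; this forces every vertex outside $N[v]$ to be strictly closer to $u$ and every vertex outside $N[u]$ to be strictly closer to $v$, which is possible only if $N[u]\cup N[v]=V$ for every edge $uv$. Equivalently, no two adjacent vertices of $G$ have a common non‑neighbour, so $\overline{G}$ has no induced $P_3$, i.e. $\overline{G}$ is a disjoint union of cliques and $G=K_{n_1,\dots,n_k}$ is complete multipartite.

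Finally, for $G=K_{n_1,\dots,n_k}$ a direct computation gives, for an edge between parts $i$ and $j$, $\deg u+\deg v=2n-n_i-n_j$ and $n_u(e)n_v(e)=n_in_j$, so
\[
\wSz(K_{n_1,\dots,n_k})=\sum_{i<j}(n_in_j)^2\,(2n-n_i-n_j).
\]
A short computation shows that merging the two smallest parts, of sizes $p\le q$, changes $\wSz$ by $pq\bigl(\sum_{l\ge3}n_l^2(4n-2n_l-3p-3q)-pq(2n-p-q)\bigr)$, which is nonnegative because every surviving part has size at least $q\ge p$, and strictly positive unless $k=3$ with all three parts equal; and rebalancing a complete bipartite graph strictly increases $\wSz$ unless it is already balanced, with $\wSz(K_{\lfloor n/2\rfloor,\lceil n/2\rceil})=n\lfloor n/2\rfloor^2\lceil n/2\rceil^2$. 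Iterating the merge down to two parts and then rebalancing, every complete multipartite graph on $n\ge 4$ vertices other than $K_{\lfloor n/2\rfloor,\lceil n/2\rceil}$ falls strictly below the bound, while for $n=3$ the only chain with no strict step is the merge $K_{1,1,1}\to K_{1,2}$, so $K_3$ ties — exactly the exceptional case in the statement. Combined with $\wSz(G)\le F(G)\le n\lfloor n/2\rfloor^2\lceil n/2\rceil^2$, this proves the conjecture, uniqueness and all.
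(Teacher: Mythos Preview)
Your proposal has a genuine gap at the step you yourself flag as ``the real obstacle'': you never prove
\[
F(G)=\sum_{uv\in E}(\deg u+\deg v)(n-\deg u)(n-\deg v)\ \le\ n\lfloor n/2\rfloor^2\lceil n/2\rceil^2.
\]
The sketched approaches (smoothing to regular graphs, or an optimization over degree sequences with a sign-changing mixed partial) are not carried out, and neither is clearly workable---moving degree between vertices changes many edge-terms of $F$ at once with no evident monotonicity. Without this inequality you have neither the upper bound on $\wSz(G)$ nor the implication $\wSz(G)=n\lfloor n^2/4\rfloor^2\Rightarrow \wSz(G)=F(G)$ that your subsequent equality analysis relies on.

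The paper closes exactly this gap by a factoring trick. Rather than your product estimate $n_u(e)\,n_v(e)\le(n-\deg u)(n-\deg v)$, it uses the sum estimate $n_u(e)+n_v(e)\le 2n-\deg u-\deg v$ together with AM--GM to get $n_u(e)\,n_v(e)\le(n-x_e)^2$, where $x_e=\tfrac12(\deg u+\deg v)$. The resulting per-edge bound $2x_e(n-x_e)^2$ then factors as $\bigl(x_e(n-x_e)\bigr)\cdot 2(n-x_e)\le \lfloor n^2/4\rfloor\cdot 2(n-x_e)$, and the sum telescopes via handshaking:
\[
\sum_e 2(n-x_e)=\sum_{v} \deg(v)\,(n-\deg v)\ \le\ n\lfloor n^2/4\rfloor.
\]
Since $(n-\deg u)(n-\deg v)\le(n-x_e)^2$ by AM--GM, this argument in fact also proves your missing inequality $F(G)\le n\lfloor n^2/4\rfloor^2$; so your outline can be completed, but its completion is precisely the paper's key idea. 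The paper also settles equality more directly: equality in $\sum_v \deg(v)(n-\deg v)\le n\lfloor n^2/4\rfloor$ forces every degree into $\{\lfloor n/2\rfloor,\lceil n/2\rceil\}$, and equality in the per-edge bound forces $n_u(e)=n_v(e)=n-x_e$, which together pin down $K_{\lfloor n/2\rfloor,\lceil n/2\rceil}$ (and $K_3$ when $n=3$) without any detour through general complete multipartite graphs and a merging computation.
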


First, we make the following crucial observation.
\begin{lem}\label{lem1}
	For any edge $e=uv \in E(G)$, we have $n_u(e)+n_v(e)+\deg(u)+\deg(v) \le 2n.$
\end{lem}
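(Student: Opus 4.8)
The plan is to bound $n_u(e)+n_v(e)$ and $\deg(u)+\deg(v)$ separately, each in terms of $n$ and the number $n_0$ of vertices equidistant from $u$ and $v$ (i.e.\ with $d(x,u)=d(x,v)$), and then add the two estimates. The key point in introducing $n_0$ is that it will enter the first estimate with a minus sign and the second with a plus sign, so the two contributions cancel and leave exactly $2n$.

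First I would record the easy identity $n_u(e)+n_v(e)=n-n_0$: the sets $\{x:d(x,u)<d(x,v)\}$, $\{x:d(x,v)<d(x,u)\}$ and $\{x:d(x,u)=d(x,v)\}$ partition $V(G)$ and have sizes $n_u(e)$, $n_v(e)$ and $n_0$ respectively. Next, by inclusion--exclusion on the two neighbourhoods, $\deg(u)+\deg(v)=|N(u)\cup N(v)|+|N(u)\cap N(v)|$. The union is a subset of $V(G)$, hence has size at most $n$. For the intersection, every common neighbour $w$ of $u$ and $v$ satisfies $d(w,u)=d(w,v)=1$, so $w$ lies in the equidistant set; therefore $|N(u)\cap N(v)|\le n_0$. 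Adding the two estimates gives $n_u(e)+n_v(e)+\deg(u)+\deg(v)\le (n-n_0)+(n+n_0)=2n$.

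There is essentially no obstacle beyond spotting the cancellation: once one decides to track the equidistant vertices, both halves are immediate bookkeeping. The only minor point to check is that $u$ and $v$ themselves are excluded from $N(u)\cap N(v)$ (a simple graph has no loops) and are not equidistant from each other, so the inclusion of the common neighbours into the equidistant set is clean. It is also worth noting, with an eye toward the equality analysis later in this section, that equality in Lemma~\ref{lem1} forces $N(u)\cup N(v)=V(G)$ together with every equidistant vertex being a common neighbour of $u$ and $v$; this refinement is not needed for the statement itself but drops out of the same argument.
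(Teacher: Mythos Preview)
Your proof is correct and uses the same key observation as the paper---common neighbours of $u$ and $v$ are equidistant from both and hence excluded from the count $n_u(e)+n_v(e)$. The paper phrases this via a case split on whether $\deg(u)+\deg(v)\le n$, whereas your introduction of $n_0$ and inclusion--exclusion handles both cases at once; the underlying argument is identical.
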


\begin{proof}
	Since $n_u(e)+n_v(e) \le n$, this is trivial when $\deg(u)+\deg(v) \le n$. If $\deg(u)+\deg(v) > n$, then $u$ and $v$ have at least $\deg(u)+\deg(v) - n$ neighbours $x$ in common, which satisfy $d(x,u)=d(x,v)$ and hence do not belong to $N_u(e)$ nor $N_v(e).$
	Hence $n_u(e)+n_v(e) \le n-(\deg(u)+\deg(v) - n)$ which is equivalent with $n_u(e)+n_v(e)+\deg(u)+\deg(v) \le 2n.$
\end{proof}

Let the degrees of the vertices of the graph be $a_1,a_2, \ldots, a_n$. For an edge $e=uv$ whose end vertices have degree $a_i$ and $a_j$, let 
$x_e=\frac{ a_i+ a_j}{2}$ be the average degree of the two endvertices.

Then by double counting, we find the following two equalities (the first one being the hand shaking lemma)
\begin{lem}\label{lem:doublecount}
	We have 
	$$
	\sum_i a_i = 2 |E| \mbox{ and }
	\sum_i a_i^2 =2 \sum_e x_e.
	$$
\end{lem}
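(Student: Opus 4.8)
The plan is to prove both identities by elementary double counting, that is, by counting the same quantity in two different ways or by interchanging the order of a summation. Neither identity requires any structural hypothesis on $G$ beyond it being a graph, so I expect the proof to be short and purely combinatorial.

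For the first identity, I would count the incidences between vertices and edges, i.e.\ the pairs $(v,e)$ with $v$ an endpoint of $e$. Summing over edges, each edge $e=uv$ has exactly two endpoints and so contributes $2$, giving a total of $2|E|$. Summing over vertices instead, each vertex $v$ is an endpoint of exactly $\deg(v)=a_v$ edges, so the total is $\sum_i a_i$. Equating the two counts yields $\sum_i a_i = 2|E|$, which is the handshaking lemma.

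For the second identity, I would start from the definition $x_e=\frac{a_i+a_j}{2}$ for an edge $e=uv$ whose endpoints have degrees $a_i$ and $a_j$, so that
$$
2\sum_e x_e = \sum_{e=uv\in E(G)} \bigl(\deg(u)+\deg(v)\bigr).
$$
Now I would interchange the order of summation so as to group the terms by vertex rather than by edge. In the right-hand sum, the quantity $\deg(v)=a_v$ is added once for every edge incident to $v$; since there are exactly $\deg(v)=a_v$ such edges, vertex $v$ contributes $a_v\cdot a_v = a_v^2$ in total. Summing over all vertices then gives $\sum_i a_i^2$, so that $2\sum_e x_e = \sum_i a_i^2$, which rearranges to the claimed equality.

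The only point requiring any care is the bookkeeping in the interchange of summation for the second identity, namely the observation that each vertex $v$ appears in exactly $\deg(v)$ of the edge-terms; once this is noted, both identities follow immediately. I therefore do not anticipate a genuine obstacle here, as the lemma is essentially a statement of the handshaking lemma together with its degree-weighted refinement.
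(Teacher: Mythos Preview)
Your proof is correct and follows exactly the double-counting approach the paper intends; in fact the paper offers no proof beyond the remark that both identities follow ``by double counting'' (the first being the handshaking lemma), so your write-up simply spells out that routine argument.
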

Next, we prove two propositions which are the main ingredients for the proof.

\begin{prop}\label{red_wSz}
	For every edge $e=uv$, we have 
	$$\left(\deg(u)+\deg(v)\right) \cdot n_u(e) \cdot n_v(e) \le 2  \left \lfloor \frac {n^2}4 \right \rfloor  (n-x_e).$$
\end{prop}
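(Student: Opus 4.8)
The inequality to establish is, for every edge $e=uv$,
$$\left(\deg(u)+\deg(v)\right) \cdot n_u(e) \cdot n_v(e) \le 2 \left\lfloor \frac{n^2}{4}\right\rfloor (n-x_e),$$
where $x_e=\tfrac{\deg(u)+\deg(v)}{2}$. The plan is to regard the left-hand side as a function of the three quantities $s:=\deg(u)+\deg(v)=2x_e$, $p:=n_u(e)$ and $q:=n_v(e)$, subject to the two constraints coming from our earlier observations: $p+q\le n$ (trivially, since $N_u(e)$ and $N_v(e)$ are disjoint) and, crucially, Lemma~\ref{lem1}, which gives $p+q \le 2n - s$. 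So it suffices to bound $s\cdot p\cdot q$ from above over all nonnegative reals with $p+q\le \min\{n,\,2n-s\}$.

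\textbf{Key steps.} First I would fix $s$ and maximize $p\cdot q$ under the active linear constraint $p+q\le \min\{n,2n-s\}=:M(s)$; by AM--GM this maximum is $M(s)^2/4$, attained at $p=q=M(s)/2$. Hence it remains to show $s\cdot M(s)^2/4 \le 2\lfloor n^2/4\rfloor (n-s/2) = \lfloor n^2/4\rfloor(2n-s)$ for all relevant values of $s$ (an edge forces $s\ge 2$, and $s\le 2(n-1)$). Split into two regimes. If $s\le n$, then $M(s)=n$, and the claim becomes $s n^2/4 \le \lfloor n^2/4\rfloor(2n-s)$; since $2n-s\ge n$ here and $\lfloor n^2/4\rfloor \ge (n^2-1)/4$, this reduces to checking $s n^2 \le (n^2-1)(2n-s)\cdot(\text{something})$—more cleanly, since $s\le n\le 2n-s$, one has $s\cdot n^2/4\le n\cdot n^2/4 = n^3/4$ while $\lfloor n^2/4\rfloor(2n-s)\ge \lfloor n^2/4\rfloor\cdot n$, and $\lfloor n^2/4\rfloor\cdot n \ge n^3/4 - n/4 \ge sn^2/4$ fails only in edge cases which I would check directly for small $n$ or tighten using that $p,q$ are integers bounded by $\lfloor n/2\rfloor,\lceil n/2\rceil$ respectively when $p=q$. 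If instead $s> n$, then $M(s)=2n-s$, so the claim is $s(2n-s)^2/4 \le \lfloor n^2/4\rfloor (2n-s)$, i.e. $s(2n-s)/4\le \lfloor n^2/4\rfloor$, i.e. $s(2n-s)\le 4\lfloor n^2/4\rfloor$; since $s(2n-s)$ is a downward parabola in $s$ with maximum $n^2$ at $s=n$, and $4\lfloor n^2/4\rfloor \ge n^2-1 \ge s(2n-s)$ for integer $s\ne n$, this holds (and at $s=n$ both regimes meet and one uses the integrality of $p,q$).

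\textbf{Main obstacle.} The genuine subtlety is not the real-variable optimization but the floor function: the bound $2\lfloor n^2/4\rfloor$ on the right is slightly smaller than $n^2/2$ when $n$ is odd, so the naive real-relaxation bound $s\cdot n^2/4 \le (n^2/2)(n-s/2)$ would be \emph{tight at $s=n$} and would not leave room for the floor. The fix is to exploit integrality more carefully exactly at the boundary $p+q=n$, $p=q$: this can only happen when $n$ is even, in which case $\lfloor n^2/4\rfloor = n^2/4$ and there is no loss; when $n$ is odd and $p+q=n$, one cannot have $p=q$, so $p\cdot q\le \lfloor n/2\rfloor\lceil n/2\rceil = \lfloor n^2/4\rfloor$ strictly improves the AM--GM bound $n^2/4$ to $\lfloor n^2/4\rfloor$, and the inequality goes through. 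So the careful version of Step~1 should maximize the \emph{integer} product $p\cdot q$, giving $\lfloor M(s)^2/4\rfloor$ in place of $M(s)^2/4$, after which every case closes cleanly. I expect the write-up to be short once this integrality point is isolated; the remaining work is the two-line parabola estimate in each regime.
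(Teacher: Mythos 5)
Your proposal is correct and follows essentially the same route as the paper: bound $n_u(e)\,n_v(e)$ by AM--GM under the constraint of Lemma~\ref{lem1}, note that the resulting real-variable estimate is only tight at $x_e=\frac n2$, and close that single delicate case (for $n$ odd) by the integrality of $n_u(e)$ and $n_v(e)$, which forces $n_u(e)\,n_v(e)\le\lfloor n^2/4\rfloor$ once $n_u(e)+n_v(e)\le n$. The only cosmetic difference is that in the regime $x_e<\frac n2$ you invoke the extra constraint $n_u(e)+n_v(e)\le n$, whereas the paper keeps the bound $2x_e(n-x_e)^2$ and uses the integrality of $2x_e$ to get $x_e(n-x_e)\le\lfloor n^2/4\rfloor$ directly; both computations are routine.
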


\begin{proof}
	Combining $n_u(e) \cdot n_v(e) \le \left( \frac{n_u(e) + n_v(e)}2\right)^2$ (by AM-GM) and Lemma~\ref{lem1}, we have $$\left(\deg(u)+\deg(v)\right) \cdot n_u(e) \cdot n_v(e) \le 2x_e( n-x_e)^2.$$
	Now we have $x_e(n-x_e) \le  \left \lfloor \frac {n^2}4 \right \rfloor $ if $n$ is even or when $n$ is odd and $x_e \not= \frac{n}{2}$ (as $2x_e$ is integral).
	
	When $x_e=\frac n2$, then Lemma~\ref{lem1} gives $n_u(e) + n_v(e) \le n$ and hence $n_u(e) \cdot n_v(e) \le  \lfloor \frac {n^2}4 \rfloor$ and the conclusion holds again.
\end{proof}

\begin{prop}\label{sum_wSz}
	For any graph $G$, we have
	$$\sum_e 2\left(n-x_e \right) \le n \left \lfloor \frac {n^2}4 \right \rfloor .$$
\end{prop}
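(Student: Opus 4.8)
The plan is to reduce the inequality, via the double-counting identities of Lemma~\ref{lem:doublecount}, to a statement purely about the degree sequence, and then bound it vertex by vertex. Write the degrees as $a_1,\dots,a_n$ and recall $x_e=\frac{a_i+a_j}{2}$ for an edge $e$ joining vertices of degrees $a_i,a_j$. Then
\[
\sum_e 2\left(n-x_e\right)=2n\lvert E\rvert-2\sum_e x_e .
\]
By the two equalities in Lemma~\ref{lem:doublecount}, namely $\sum_i a_i=2\lvert E\rvert$ and $\sum_i a_i^2=2\sum_e x_e$, this becomes
\[
\sum_e 2\left(n-x_e\right)=n\sum_i a_i-\sum_i a_i^2=\sum_{i=1}^{n} a_i\left(n-a_i\right).
\]

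First I would carry out exactly this rewriting. The key step that remains is then the per-vertex bound: since each $a_i$ is an integer with $0\le a_i\le n-1$, the quantity $a_i(n-a_i)$ is maximized (over integers) when $a_i=\lfloor n/2\rfloor$ or $\lceil n/2\rceil$, giving $a_i(n-a_i)\le \lfloor n/2\rfloor\lceil n/2\rceil=\lfloor n^2/4\rfloor$. Summing over the $n$ vertices yields
\[
\sum_e 2\left(n-x_e\right)=\sum_{i=1}^{n} a_i\left(n-a_i\right)\le n\left\lfloor \frac{n^2}{4}\right\rfloor,
\]
which is the claim.

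There is essentially no real obstacle here; the only point requiring a moment's care is to use integrality of the degrees to get the bound $\lfloor n^2/4\rfloor$ rather than the weaker $n^2/4$, so that the inequality matches the floor appearing in Proposition~\ref{red_wSz} and the two propositions combine cleanly. I would also remark that equality throughout forces every vertex to have degree $\lfloor n/2\rfloor$ or $\lceil n/2\rceil$, which is the degree condition of the balanced complete bipartite graph and will be used when the two propositions are assembled to prove Conjecture~\ref{conj:BFJS}.
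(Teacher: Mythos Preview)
Your proof is correct and follows exactly the same route as the paper: rewrite $\sum_e 2(n-x_e)$ via Lemma~\ref{lem:doublecount} as $\sum_i a_i(n-a_i)$ and then bound each summand by $\lfloor n^2/4\rfloor$. Your additional remarks on integrality and the equality case are accurate and anticipate precisely how the proposition is used later in the proof of Conjecture~\ref{conj:BFJS}.
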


\begin{proof}
	Using Lemma~\ref{lem:doublecount}, we get 
	\begin{align*}
	\sum_e 2\left(n-x_e \right) &=2|E| n -2 \sum_e x_e\\
	&= \sum_i a_i (n- a_i)\\
	&\le n   \lfloor \frac {n^2}4 \rfloor.
	\end{align*}
\end{proof}

\begin{proof}[Proof of Conjecture~\ref{conj:BFJS}]
	Combining Proposition~\ref{red_wSz} and Proposition~\ref{sum_wSz}, we find that the weighted Szeged index of the graph satisfies
	
	\begin{align*}
	\wSz(G) &= \sum_e \left(\deg(u)+\deg(v)\right) \cdot n_u(e) \cdot n_v(e)\\
	&\le    \left \lfloor \frac {n^2}4 \right \rfloor  \sum_e 2\left(n-x_e \right) \\
	&\le n  \left( \left \lfloor \frac {n^2}4 \right \rfloor  \right)^2\\
	&=\wSz( K_{\lfloor \frac n2 \rfloor, \lceil \frac n2 \rceil})
	\end{align*}
	
	If $n$ is even, equality holds if and only there is equality in every step. In particular $a_i=\frac n2$ for all $i$ and thus $n_u(e)=n_v(e)=\frac n2$ for every edge $e=uv$, which also implies that the graph should be triangle-free as well since $n_u(e)+n_v(e)=n$ for every edge $e.$
	But then $u$ and $v$ are connected with disjoint independent sets of order $\frac n2$ and since the degree of every vertex is exactly $\frac n2$, we conclude $G \cong K_{ \frac n2 , \frac n2 }$.
	
	If $n=3$, we see that $\wSz(K_3)=\wSz(K_{2,1})$ and these two graphs are the only extremal ones.
	
	If $n\ge 5$ is odd, equality holds if and only if $G \cong K_{\lfloor \frac n2 \rfloor, \lceil \frac n2 \rceil}$.
	Note that all $a_i$ need to be equal to $\lfloor \frac n2 \rfloor$ or  $\lceil \frac n2 \rceil$ to have equality in Proposition~\ref{sum_wSz}.
	Note that we also need equality in Lemma~\ref{lem1} for every edge to have equality in Proposition~\ref{red_wSz}.
	So it is impossible that $x_e < \frac n2$ and if $x_e>\frac n2$, we have a triangle with three vertices which all need to have degree $\lceil \frac n2 \rceil$ and do not create other triangles, which is impossible. Hence the conclusion follows as $x_e = \frac n2$ for every edge, i.e. the end vertices of any edge have degree $\lfloor \frac n2 \rfloor$ and $ \lceil \frac n2 \rceil .$
\end{proof}

\acknowledgements
\label{sec:ack}
The author is very grateful to the anonymous referees for the time they dedicated to this article and for the comments they made which improved this manuscript.

\nocite{*}
\bibliographystyle{abbrvnat}
\bibliography{genDankelmann}
\label{sec:biblio}

\end{document}